\theoremstyle{thmstyleone}%
\newtheorem{theorem}{Theorem}
\newtheorem{proposition}{Proposition}
\newtheorem{lemma}{Lemma}
\newtheorem{corollary}{Corollary}
\theoremstyle{thmstyletwo}%
\newtheorem{example}{Example}%
\newtheorem{remark}{Remark}%
\newtheorem{assumption}{Assumption}%
\newtheorem{algorithm}{Algorithm}%
\theoremstyle{thmstylethree}%
\newtheorem{definition}{Definition}%
\DeclareMathOperator*{\ep}{EP}
\DeclareMathOperator*{\grad}{grad}
\DeclareMathOperator*{\amin}{argmin}
\DeclareMathOperator*{\dom}{dom}
\DeclareMathOperator*{\intr}{int}
\DeclareMathOperator*{\prox}{Prox}
\DeclareMathOperator*{\tr}{trace}
\DeclareMathOperator*{\Ex}{Exp}
\DeclareMathOperator*{\Lg}{Log}
\newcommand{\R}{\mathbb{R}}
\newcommand{\M}{\mathcal{M}}
\newcommand{\N}{\mathbb{N}}
\newcommand{\T}{\mathcal{T}}
\newcommand{\Ps}{\mathcal{P}}
\newcommand{\la}{\leftarrow}
\newcommand{\Pt}[1]{\underset{#1}{\mathrm{P}}}
\begin{document}

\title[A Bregman Regularized Proximal Point Method for Solving Equilibrium Problems on Hadamard Manifolds]{A Bregman Regularized Proximal Point Method for Solving Equilibrium Problems on Hadamard Manifolds}


\author[1]{\fnm{Shikher} \sur{Sharma}}\email{shikhers043@gmail.com}

\author*[1]{\fnm{Simeon} \sur{Reich}}\email{sreich@technion.ac.il}

\affil*[1]{\orgdiv{Department of Mathematics}, \orgname{Technion -- Israel Institute of Technology}, \orgaddress{\city{32000}, \postcode{Haifa}, \country{Israel}}}


\abstract{	In this paper we develop a Bregman regularized proximal point algorithm for solving monotone equilibrium problems on Hadamard manifolds.  It has been shown that the regularization term induced by a Bregman function is, in general, nonconvex on Hadamard manifolds unless the curvature is zero. Nevertheless, we prove that the proposed Bregman regularization scheme does converge to a solution of the equilibrium problem on Hadamard manifolds in the presence of a strong assumption on the convexity of the set formed by the regularization term.  Moreover, we employ a coercivity condition on the Bregman function which is weaker  than those typically assumed in the existing literature on Bregman regularization. Numerical experiments on illustrative examples demonstrate the practical effectiveness of our proposed method.}

\keywords{Bregman distance,  Equilibrium problmes, Hadamard manifolds, Proximal methods}


\maketitle

	\section{Introduction}

The equilibrium problem has been widely explored and continues to attract significant research interest because of its broad range of applications. Many important mathematical models can be expressed as special cases of equilibrium problems, including optimization, Nash equilibrium, complementarity, fixed point, and variational inequality problems. Detailed discussions on the theory and developments of equilibrium problems can be found in the works of Blum and Oettli \cite{Blum1994}, Bianchi and Schaible \cite{Bianchi1996}, and references therein.

			To solve equilibrium problems efficiently, various iterative and proximal-type algorithms have been developed in linear spaces. These methods often rely on regularization terms that ensure stability, monotonicity, and convergence. Among these, Bregman functions and their associated distances have proved particularly powerful \cite{Reem2019}. Bregman distances generalizes the squared Euclidean norm and enable geometry-aware formulations of proximal and mirror-descent methods \cite{Bregman1967, Censor1981, Reich1996}. Under suitable convexity and smoothness assumptions, such algorithms exhibit convergence and have been widely applied to equilibrium and variational inequality problems \cite{Reich2010two, Burachik2012}.

Motivated by the success of Bregman-based methods in Euclidean spaces, recent research has sought to extend these ideas to Riemannian manifolds, especially Hadamard manifolds, which are complete, simply connected manifolds with nonpositive curvature. Such spaces provide a natural setting for optimization and variational analysis in non-linear geometries \cite{PapaJCA2009, PapaJOGO2013}. However, it has been shown that the regularization term induced by a Bregman function is, in general, nonconvex on Hadamard manifolds unless the curvature is zero \cite{kristaly2016convexities, NetoNote2017}. This limitation makes the direct extension of many Bregman proximal algorithms challenging.

Parallel to these developments, the proximal point method on Hadamard manifolds has attracted substantial attention, with various works focusing on equilibrium problems in this setting; see, for example, Ferreira and Oliveira \cite{Ferreira2002}, Li et al. \cite{Li2009JLM}, and Tang et al. \cite{Tang2013}. Further generalizations of classical Euclidean concepts to Riemannian manifolds have been proposed by Ledyaev and Zhu \cite{Ledyaev2007}, Ferreira et al. \cite{Ferreira2005}, and Li et al. \cite{Colao2012}. One of the major advantages of such an extension lies in the ability to transform certain nonconvex problems into convex ones by carefully choosing the Riemannian metric \cite{Rapscak, Neto2002, Bento2012, Colao2012}.

	In addition to these developments, recent studies have explored proximal-type methods for monotone operators in Hadamard spaces. In particular, Djafari Rouhani and Mohebbi \cite{RouhaniMohebbi2023} developed a proximal framework allowing inexact computations and possibly unbounded errors via explicitly defined resolvents. Nevertheless, the implementation still relies on the exact computation of the resolvent.

Note that Hadamard manifolds generally do not have a linear structure, which indicates that properties, techniques, and algorithms in linear spaces cannot be used in Hadamard manifolds. Therefore, it is valuable and interesting to study algorithms for solving equilibrium problems in Hadamard manifolds.

Let $C$ be a nonempty, closed and geodesically convex subset of an Hadamard manifold $\M$ and let  $F \colon C \times C \to \R$   
be a bifunction satisfying the property $F(x,x)=0$ for all $x\in C$.  Then the equilibrium problem on Hadamard manifolds is formulated as follows:
\begin{equation}\label{equib-prob}
	\text{Find } x^*\in C \text{ such that } F(x^*,y)\geq 0 \text{ for all } y \in C.
\end{equation}
We denote problem \eqref{equib-prob} by $\ep(F,C)$ and the solution set of problem \eqref{equib-prob} by $\Omega(F,C)$.

A natural extension of the resolvent of a bifunction, as introduced by Hirstoaga \cite{Hirstoaga} in the linear setting, was proposed in \cite{Colao2012} for Hadamard manifolds. However, the well-definedness of this extension is based on the convexity of the function  $y\mapsto \langle u,\exp_x^{-1}y\rangle$, $u\in \T_xM$, which has been shown not to be convex in general \cite{kristaly2016convexities, NetoNote2017}.

To overcome this issue, Bento et al. \cite{Bento2022AOR} proposed a regularized version of the proximal point algorithm for equilibrium problem \eqref{equib-prob} on Hadamard manifolds by introducing a new regularization term that is convex on Hadamard manifolds. Subsequently, Bento et al. \cite{Bento2022} defined a new resolvent for bifunctions based on Busemann functions. This formulation not only extends the classical resolvent in linear spaces, as originally proposed by Combettes and Hirstoaga \cite{Hirstoaga}, but also ensures that the regularization term remains convex on general Hadamard manifolds. More recently, Bento et al. \cite{Bento2024} further investigated this Busemann-based resolvent and developed a proximal point algorithm for solving equilibrium problems on Hadamard manifolds. Building upon this, Sharma et al. \cite{Sharmaa2025regularized} recently introduced a regularized extragradient method for solving equilibrium problems using the Busemann framework.

Despite these advances, most existing methods rely on strong convexity or coercivity assumptions on the regularization term to guarantee convergence. This motivates the development of new convergence results that hold under weaker assumptions, including the convergence of Bregman-based methods on Hadamard manifolds without requiring the convexity of the regularization term.

In this work we develop a Bregman regularized proximal point algorithm for solving monotone equilibrium problems on Hadamard manifolds. A key contribution of our study is the introduction of a coercivity condition on the Bregman function which is weaker than those commonly assumed in the literature \cite{Reich2010two}. This extension broadens the applicability of Bregman-type methods to a wider class of problems and geometries.

	 In this work, we develop a Bregman regularized proximal point algorithm for solving monotone equilibrium problems on Hadamard manifolds. A key contribution is the use of a weaker coercivity condition on the Bregman function than those typically assumed in the literature \cite{Reich2010two}, thereby extending the applicability of Bregman-type methods. Unlike resolvent-based approaches, our framework relies on Bregman regularization, explicit resolvent computation is generally unavailable. This extension broadens the applicability of Bregman-type methods to a wider class of problems and geometries.

\section{Preliminaries}

For the basic definitions and results regarding the geometry of Riemannian manifolds, we refer to \cite{Docarmo,Sakai,Udriste}. 

Let $\M$ be an $m$-dimensional differentiable manifold and let  $x \in \M$. The set of all tangent vectors at the point $x$, denoted by $\T_x\M$, is called the tangent space of $\M$ at $x$. It forms a real vector space of dimension $m$. The tangent bundle of $\M$ is defined as the disjoint union $\T\M = \bigcup_{x \in \M} \T_x\M$, which is  a smooth manifold of dimension $2m$.

We suppose that $\M$ is equipped with a Riemannian metric, that is,  a smoothly varying family of inner products on the tangent spaces $\T_x\M$.  The metric at $x\in \M$ is denoted $\langle \cdot, \cdot \rangle_x \colon \T_x\M \times \T_x\M \to \R$.
A differentiable manifold $\M$ with a Riemannian metric $\langle \cdot , \cdot \rangle$ is said to be a Riemannian manifold.
The induced norm on $\T_x\M$ is denoted by $\|\cdot \|_x$. 

Let $x,y \in \M$ and let $\gamma\colon [a,b] \to \M$ be a piecewise smooth curve joining $x$ to $y$. Then the length of the curve $\gamma$ is defined by 
$L(\gamma) \coloneq \int_a^b \|\gamma^{\prime}(t)\| dt$. The minimal length of all such curves joining $x$ to $y$ is called the Riemannian distance, denoted by $d(x,y)$, that is, $d(x, y) \coloneq \inf\{L(\gamma) \colon \gamma \in C_{xy}\}$, where $C_{xy}$ denotes the set of all piecewise smooth curves $\gamma\colon[0,1]\to \M$ such that $\gamma(0)=x$ and $\gamma(1)=y$.  

A single-valued vector field on a differentiable manifold $\M$ is a smooth mapping $V\colon\M \to \T\M$ such that, for each $x \in \M$, $V(x) \in \T_x\M$.
Let $\nabla $ be the Levi-Civita connection associated with the Riemannian manifold $\M$.
A vector field $V$ along $\gamma$ is said to be parallel if $\nabla_{\gamma^{\prime }}V=0$. 
If $\gamma^{\prime }$ is parallel along $\gamma$, that is, $\nabla_{\gamma^{\prime }} \gamma^{\prime }=0$, then $\gamma$ is said to be a geodesic. In this case, $\|\gamma^{\prime }\|$ is a constant. Moreover, if $\|\gamma^{\prime }\|=1$, then $\gamma$ is called a unit-speed geodesic. A geodesic joining $x$ to $y$ in the Riemannian manifold $\M$ is said to be a minimal geodesic if its length is equal to $d(x,y)$. We denote a geodesic joining $x$ and $y$ by $\gamma(x,y; \cdot)$, that is,
$\gamma(x,y; \cdot)\colon[a,b] \to \M$ is such that, for $a,b \in \R$, we have $\gamma(x,y;a)=x$ and $\gamma(x,y;b)=y$.

The parallel transport on the tangent bundle $\T\M$ along the geodesic $\gamma $ with respect to a Riemannian connection $\nabla$ is defined to be the mapping
$\mathrm{P}_{\gamma(x,y;b),\gamma(x,y;a)} \colon \T_{\gamma(x,y;a)}\M \to \T_{\gamma(x,y;b)}\M$ such that 
$\mathrm{P}_{\gamma(x,y;b),\gamma(x,y;a)}(v)=V(\gamma(x,y;b))  \text{ for all }  a,b \in \R,  v \in \T_x\M,$
where $V$ is the unique vector field satisfying $\nabla_{\gamma'(x,y;t)}V =0$ for all $t \in [a,b]$ and $V(\gamma(x,y;a))=v$. 

The Riemannian metric induces a mapping \( f \mapsto \grad f \) that associates to each differentiable function \( f \colon \M \to \R \) 
a unique vector field \( \grad f \colon \M \to \T\M \), called the gradient of \( f \), defined by
\(
\langle \grad f(x), X(x) \rangle = df_x(X(x))
\)
for all vector fields \( X \colon \M \to T\M \) and all \( x \in \M \). Here, \( df_x \) denotes the differential of \( f \) at the point \( x \in \M \).

A Riemannian manifold $\M$ is said to be complete if for any $x\in\M$ all geodesics emanating from $x$ are defined for all $t\in\R$. By the Hopf-Rinow Theorem \cite{Sakai}, if $\M$ is complete, then any pair of points in $\M$ can be joined by a minimal geodesic. Moreover, $(\M,d)$ is a complete metric space. If $\M$ is a complete Riemannian manifold, then the exponential map $\exp_x \colon \T_x\M
\to \M$ at $x\in \M$ is defined by $	\exp_xv=\gamma_{x,v}(1) \quad \text{ for all } \quad v\in \T_p\M$,
where $\gamma_{x,v} \colon  \R \to \M$ is the unique geodesic
starting from $x$ with velocity $v$, that is, $\gamma_{x,v}(0)=p$ and $
\gamma'_{x,v}(0)=v$. It is known that $\exp_x(tv)=\gamma_{x,v}(t)$ for
each real number $t$ and $\exp_x(\mathbf{0})=\gamma_{x,\mathbf{0}}(0)=x$, where $\mathbf{0}$ is the zero tangent vector.

\begin{definition}[\cite{Li2009JLM}]
	A complete simply connected Riemannian manifold of nonpositive sectional curvature is called an Hadamard manifold.
\end{definition}

Since in an Hadamard manifold $\M$, any two points can be joined by a unique geodesic, we denote the parallel transport by  $\Pt{y\la x}$ instead of $P_{\gamma(x,y;b),\gamma(x,y;a)}$.

\begin{remark} [\cite{Li2009JLM}] \label{re101} Let $\M$ be an Hadamard manifold.
	If $x,y \in \M$ and $v \in \T_y\M$, then 
	\begin{equation*}
		\langle v, -\exp_y^{-1}x \rangle = \langle v, \Pt{y\la x}\exp_x^{-1}y\rangle =\langle \Pt{x\la y} v, \exp_x^{-1}y \rangle.
	\end{equation*}
\end{remark}

\begin{definition}[\cite{Li2009JLM}]
	A nonempty subset $C$ of an Hadamard manifold $\M$ is said to be geodesically convex if, for any two points $x$ and $y$ in $C$, the geodesic joining $x$ to $y$ is contained in $C$, that is, if $\gamma(x,y; \cdot)\colon[a,b]\to \M$ is the geodesic such that $x=\gamma(x,y;a)$ and $y=\gamma(x,y;b)$, then
	$\gamma((1-t)a+tb) \in C$ for all $t\in [0,1]$.
\end{definition}

\begin{definition}[\cite{Li2009JLM}]
	Let $\M$ be an Hadamard manifold. A function $f\colon\M \to \R$ is said to be geodesically convex
	if, for any geodesic $\gamma(x,y; \cdot)\colon[a,b] \to \M$,
	the composite function $f\circ \gamma(x,y;\cdot)\colon[a,b] \to \R$ is convex, that is,
	\begin{equation*}
		f\circ \gamma(x,y;(1-t)a+tb) \leq (1-t)(f\circ \gamma(x,y;a))+ t(f \circ
		\gamma(x,y;b))
	\end{equation*}
	for any $a,b \in \R$, $x,y \in \M$ and $t\in [a,b]$.
\end{definition}

\begin{lemma}[\cite{Li2009JLM}]\label{exp-rem}
	Let $\{x_n\}$ be a sequence in an Hadamard manifold $\M$ such that $x_n \to x\in \M$. Then, for any $y \in \M$, we have
	$$\exp_{x_n}^{-1}y \to \exp_x^{-1}y \mbox{ and }  \exp_y^{-1}x_n \to \exp_y^{-1}x.$$ Moreover, for any $u\in \T_{x_0}\M$, the function $V \colon \M \to \T\M$ defined by $V(x)=\Pt{x\la x_0}(u)$ for all $x\in \M$ is continuous.
\end{lemma}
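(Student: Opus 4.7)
The plan is to rely on the Cartan--Hadamard theorem: on an Hadamard manifold $\M$, the exponential map $\exp_x \colon \T_x\M \to \M$ is a diffeomorphism for every $x \in \M$, and the global exponential map $\exp \colon \T\M \to \M$ is smooth. With this, each of the three claims reduces to a continuity statement about known smooth maps, with the only subtlety being the interpretation of convergence across varying tangent spaces.

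For the convergence $\exp_y^{-1} x_n \to \exp_y^{-1} x$ I would argue directly: all of the vectors involved lie in the single fixed tangent space $\T_y\M$, and the map $\exp_y^{-1} \colon \M \to \T_y\M$ is continuous by Cartan--Hadamard, so $x_n\to x$ immediately yields the conclusion. For the convergence $\exp_{x_n}^{-1}y \to \exp_x^{-1}y$, the vectors live in different tangent spaces $\T_{x_n}\M$, so convergence must be read in the ambient tangent bundle $\T\M$ (a $2m$-dimensional smooth manifold, as noted earlier in the excerpt). To this end I would introduce $\Psi \colon \T\M \to \M\times\M$ defined by $\Psi(v) = (\pi(v), \exp_{\pi(v)}(v))$, where $\pi$ is the bundle projection. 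On an Hadamard manifold $\Psi$ is a smooth bijection whose derivative is invertible at every point (since each $d\exp_x$ is invertible), hence $\Psi$ is a diffeomorphism. Its inverse satisfies $\Psi^{-1}(x,y) = \exp_x^{-1}y$, and applying its continuity to the convergent sequence $(x_n,y) \to (x,y)$ in $\M\times\M$ gives the desired convergence in $\T\M$.

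For the continuity of $V(x) = \Pt{x \la x_0}(u)$ I would combine the first part of the lemma with continuous dependence of ODE solutions on parameters. The unique minimal geodesic from $x_0$ to $x$ is $\gamma_x(t) = \exp_{x_0}\bigl(t\,\exp_{x_0}^{-1}x\bigr)$, which, by the diffeomorphism property of $\exp_{x_0}^{-1}$, depends continuously on $x$ together with its velocity field. Parallel transport along $\gamma_x$ is the solution at $t=1$ of the linear ODE $\nabla_{\gamma_x'(t)} W(t) = 0$, $W(0) = u$, whose coefficients in any local frame are continuous functions of the parameter $x$. Standard continuous dependence then shows $W(1) = V(x)$ varies continuously with $x$ in $\T\M$.

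The main obstacle is purely conceptual: giving a rigorous meaning to the convergence $\exp_{x_n}^{-1}y \to \exp_x^{-1}y$ when these vectors sit in distinct fibres of $\T\M$. Once one commits to working with the natural smooth structure on $\T\M$ (equivalently, with local trivializations around $x$), both the first assertion and the continuity of $V$ become consequences of routine regularity results, and no additional input beyond Cartan--Hadamard and smooth dependence of ODE solutions is required.
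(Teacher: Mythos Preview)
The paper does not supply its own proof of this lemma; it is quoted as a preliminary result from \cite{Li2009JLM}. Your sketch is correct and is essentially the standard justification: Cartan--Hadamard gives that each $\exp_x$ is a diffeomorphism, your map $\Psi(v)=(\pi(v),\exp_{\pi(v)}v)$ is then a smooth bijection with block-triangular derivative (invertible because $d(\exp_x)_v$ is), hence a diffeomorphism, and $\Psi^{-1}(x,y)=\exp_x^{-1}y$ handles the cross-fibre convergence cleanly; the parallel-transport continuity via smooth dependence of the linear ODE $\nabla_{\gamma_x'}W=0$ on the parameter $x$ is likewise the right argument.
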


\begin{remark}\label{Properties}
	Let $\M$ be an Hadamard manifold and $\triangle(x,y,z)$ be a geodesic triangle in $\M$. Then the following inequalities hold:
	\begin{enumerate}
		\item[(i)]  $d^2(x,y) +d^2 (y,z)- 2 \langle \exp_y^{-1}x
		, \exp_y^{-1}z \rangle \leq d^2(z, x)$.
		\item[(ii)] $d^2(x,y) \leq \langle \exp_x^{-1}z,\exp_{x}^{-1} y \rangle + \langle \exp_{y}^{-1} z,\exp_{y}^{-1} x\rangle$.
	\end{enumerate}
\end{remark}

\begin{remark}\label{dist-grad-rem}
	For a fixed $y\in \M$, $\grad \frac{1}{2}d^2(x,y)=-\exp_{x}^{-1}y$.
\end{remark}

\begin{definition}[\cite{Ferreira2002}]
	A function $f \colon \M \to \R$ is said to be $1$-coercive at $x\in \M$ if 
	\[\lim_{d(x,y) \to \infty}\frac{f(y)}{d(x,y)}=\infty.\]
\end{definition}

If $f \colon \M \to \R$ is 1-coercive at $x\in \M$, then the set of minimizers of $f$ is nonempty.

\begin{definition}[\cite{Udriste}]
	Let $\M$ be an Hadamard manifold and let $C$ be a nonempty and 
	geodesically convex subset of $\M$. Let $f \colon C \to \R$ be a real-valued function. Then the subdifferential $\partial f\colon\M \to T\M$ of $f$ at $x$ is defined by
	\begin{equation*}\label{Subdiffeq2}
		\partial f(x) \coloneqq \left\{v\in \T_x\M  \colon  \langle v, \exp_x^{-1}y \rangle \leq f(y)-f(x) \text{ for all } y \in C\right\}.
	\end{equation*}
\end{definition}

\begin{definition}[\cite{Jost1995}]
	Let $C$ be a nonempty subset of an Hadamard manifold $\M$ and let $f\colon C \to (-\infty,\infty]$ be a proper, geodesically convex and lower semicontinuous function. The proximal operator of $f$ is defined by
	$$\operatorname{Prox}_{\lambda f}(x) \coloneq
	\amin_{y\in C}\left(f(y)+\frac{1}{2\lambda}d^2(y,x)\right) \text{ for all } x\in C \text{ and for any } \lambda \in (0,\infty).$$
\end{definition}

\begin{lemma}[\cite{Ferreira2002}, Theorem 5.1] \label{prox-lem}
	Let $\M$ be an Hadamard manifold and let $f \colon \M \to \R$ be a geodesically convex function. Let $\{x_n\}$ be the sequence generated by the proximal point algorithm, 
	that is, 
	\[x_{n+1}=\amin_{y\in \M}\left\{f(y)+\frac{1}{2\lambda_n}d^2(x_n,y)\right\} \text{ for all } n \in \N\]
	with the initial point $x_1\in \M$ and $\lambda_n\in (0,\infty)$. Then the sequence $\{x_n\}$ is well defined and characterized by $\frac{1}{\lambda_n}\exp_{x_{n+1}}^{-1}x_n \in \partial f(x_{n+1}).$
\end{lemma}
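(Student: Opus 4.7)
The plan is to establish two things: that for each $n$ the proximal subproblem has a unique minimizer, and that this minimizer is characterized by the stated inclusion. Throughout, set $\phi_n(y) := f(y) + \frac{1}{2\lambda_n}d^2(x_n,y)$.

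First, I would verify strict geodesic convexity of $\phi_n$. On an Hadamard manifold, the inequality in Remark \ref{Properties}(i), applied along a geodesic joining two candidate points, shows that $y\mapsto d^2(x_n,y)$ is strongly geodesically convex with modulus $2$; adding the geodesically convex $f$ preserves convexity and gives strict convexity for $\phi_n$. This handles \emph{uniqueness} of any minimizer.

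Second, I would establish \emph{existence} via $1$-coercivity. Because $d^2(x_n,y)/d(x_n,y) = d(x_n,y) \to \infty$, the quadratic term is $1$-coercive at $x_n$. Since $f$ is real-valued and geodesically convex on all of $\M$, the subdifferential $\partial f(x_n)$ is nonempty; picking any $\xi \in \partial f(x_n)$ and using Cauchy--Schwarz in the subgradient inequality yields
\begin{equation*}
f(y) \ \geq\  f(x_n) - \|\xi\|_{x_n}\, d(x_n,y).
\end{equation*}
Combining this lower bound with the quadratic term shows $\phi_n$ is itself $1$-coercive at $x_n$, so by the fact stated in the excerpt its minimum is attained. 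Together with the previous step this yields a unique minimizer $x_{n+1}$, proving well-definedness.

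Third, I would derive the characterization from the first-order optimality condition. Optimality of $x_{n+1}$ for the unconstrained strictly convex problem gives $0 \in \partial \phi_n(x_{n+1})$. Because $\frac{1}{2\lambda_n}d^2(x_n,\cdot)$ is smooth and geodesically convex, the Riemannian subdifferential sum rule reduces the inclusion to
\begin{equation*}
-\grad_y\!\left[\tfrac{1}{2\lambda_n}d^2(x_n,y)\right]_{y=x_{n+1}} \ \in\  \partial f(x_{n+1}).
\end{equation*}
By the symmetry of $d$ and Remark \ref{dist-grad-rem}, this gradient equals $-\frac{1}{\lambda_n}\exp_{x_{n+1}}^{-1}x_n$, yielding the claim.

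The main obstacle is a clean justification of the Riemannian subdifferential sum rule in the nonsmooth--smooth splitting used in Step 3; once one establishes that $\partial(\frac{1}{2\lambda_n}d^2(x_n,\cdot))(x_{n+1})$ is the singleton containing its Riemannian gradient (which follows from convexity plus differentiability via the defining inequality of $\partial$), the algebra goes through. A secondary subtlety is keeping the sign convention correct when passing from $\grad \frac{1}{2}d^2(\cdot,y) = -\exp^{-1}_{\cdot}y$ (Remark \ref{dist-grad-rem}) to the derivative of $d^2(x_n,\cdot)$ at $x_{n+1}$; the symmetry $d^2(x_n,y)=d^2(y,x_n)$ makes this routine but must be handled carefully.
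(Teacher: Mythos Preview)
The paper does not supply its own proof of this lemma; it is quoted as \cite[Theorem~5.1]{Ferreira2002} and used without argument. Your outline is essentially the standard proof found in that reference: strong geodesic convexity of $y\mapsto d^2(x_n,y)$ on an Hadamard manifold gives uniqueness, $1$-coercivity of $\phi_n$ (via the linear lower bound on $f$ from a subgradient at $x_n$) gives existence, and the first-order optimality condition combined with Remark~\ref{dist-grad-rem} yields the inclusion. The two technicalities you flag---the Riemannian sum rule $\partial(f+g)=\partial f+\{\grad g\}$ for $g$ smooth and convex, and the sign when swapping arguments in $d^2$---are indeed the only points requiring care, and your treatment of both is correct.
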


\begin{remark} \label{subdif-rem}
	In view of  \textnormal{\cite[Lemma 4.2]{Ferreira2002}}, the proximal operator $\prox_{\lambda f}(x)$ is single-valued. By the definition of $\partial f(x_{n+1})$ and Lemma \ref{prox-lem}, we have
	\[\frac{1}{\lambda_n}\langle \exp_{x_{n+1}}^{-1}x_n, \exp_{x_{n+1}}^{-1}x \rangle \leq f(x)-f(x_{n+1}) \text{ for all } x\in \M.\]
\end{remark}

\begin{definition}[\cite{Bento2022AOR}]
	Let $C$ be a nonempty, closed and geodesically convex subset of an Hadamard manifold $\M$.	A bifunction $F \colon C\times C \to \R$ is said to be monotone if 
	\(F(x,y)+F(y,x)\leq 0\) for all $x,y\in C$.
\end{definition}

\subsection{Equilibrium Problems}

Let $C$ be a nonempty, closed and geodesically convex subset of an Hadamard manifold $\M$ and let $F \colon C \times C \to \R$ be a bifunction.
We consider the following assumptions:
\begin{enumerate}[label=(C\arabic*)]
	\item \label{C1} $F(x,x)=0$ for all $x\in C$;
	\item \label{C2} $F(\cdot,y)$ is upper semicontinuous on $C$ for each $y\in C$;
	\item \label{C3} $F(x,\cdot)$ is geodesically convex and lower semicontinuous on $C$ for each $x\in C$;
	\item \label{C4} $F$ is monotone;
	\item \label{C5} Given a fixed $y_0\in \M$, consider a sequence $\{y_n\}\subset C$ such that $\lim_{n \to \infty} d(y_n,z_0)\to \infty$.  
	Then there exist $x^*\in C$ and $n_0\in \N$ such that $F(y_n,x^*)\leq 0$ for all $n \geq n_0$.
\end{enumerate}

We also consider an alternative to Condition \textup{\ref{C3}} as follows: 
\begin{enumerate}[label=(C\arabic*),resume]
	\item[(C3*)] \label{C33} For each $x\in C$, the set  $\{y\in C \colon  F(x,y)<0\}$ is geodesically convex and the function $F(x,\cdot)$ is lower semicontinuous.
\end{enumerate}

\begin{theorem}[\cite{Bento2022}, Theorem 4.1] \label{ep-exist}
Let $\M$ be an Hadamard manifold and let $C$ be a nonempty, closed and geodesically convex subset of $\M$. Let $F \colon C \times C$ be a bifunction. Then
\begin{enumerate}
	\item[(i)] If $F$ satisfies  Conditions \textup{\ref{C1}}-\textup{\ref{C5}}, then $\ep(F,C)$ has a solution.
	\item[(ii)] If $F$ satisfies  Conditions \textup{\ref{C1}}, \textup{\ref{C2}}, 	\textup{\hyperref[C33]{(C3*)}}, \textup{\ref{C4}} and \textup{\ref{C5}}, then $\ep(F,C)$ has a solution.
\end{enumerate} 
\end{theorem}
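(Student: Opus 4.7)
My plan is to adopt the classical KKM approach to existence of equilibria, adapted to Hadamard manifolds via a KKM-type theorem for geodesically convex subsets, such as the one established by Colao et al.\ \cite{Colao2012}. The strategy is to use monotonicity \ref{C4} to recast $\ep(F,C)$ into a Minty-type intersection problem, apply KKM to obtain a Minty solution, and then transfer back to a genuine $\ep(F,C)$ solution using the regularity hypotheses. Both parts (i) and (ii) will follow the same skeleton, with the convexity step adjusted according to whether \ref{C3} or \hyperref[C33]{(C3*)} is in force.

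The first step is to define the set-valued map $G \colon C \to 2^C$ by
\begin{equation*}
G(y) \coloneq \{x \in C \colon F(y,x) \leq 0\}.
\end{equation*}
Condition \ref{C1} gives $y \in G(y)$, so $G(y)$ is nonempty, and the lower semicontinuity of $F(y,\cdot)$ (present in both \ref{C3} and \hyperref[C33]{(C3*)}) implies that each $G(y)$ is closed in $C$. To verify the KKM property, take any finite $\{y_1,\dots,y_k\} \subset C$ and a point $x$ in its geodesic convex hull, and suppose for contradiction that $F(y_i,x) > 0$ for every $i$. Monotonicity \ref{C4} gives $F(x,y_i) < 0$, so each $y_i$ lies in the strict sublevel set $L_x \coloneq \{y \in C \colon F(x,y) < 0\}$. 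Under \ref{C3}, $L_x$ is geodesically convex as the strict sublevel set of a geodesically convex function, and under \hyperref[C33]{(C3*)} it is geodesically convex by hypothesis; either way, $L_x$ contains the geodesic convex hull of $\{y_1,\dots,y_k\}$, so $x \in L_x$, yielding $F(x,x) < 0$ and contradicting \ref{C1}.

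To meet the compactness hypothesis of the KKM theorem, I would truncate by closed balls. In an Hadamard manifold, closed balls $\overline{B}(z_0,r)$ are compact and geodesically convex, so $\overline{B}(z_0,r) \cap C$ is as well. Applying the KKM theorem to the truncated map $G_r(y) \coloneq G(y) \cap \overline{B}(z_0,r)$ (with $y$ ranging over $C \cap \overline{B}(z_0,r)$) yields a point $x_r$ satisfying $F(y,x_r) \leq 0$ for all $y \in C \cap \overline{B}(z_0,r)$. If $\{x_r\}$ is bounded, any cluster point $x^*$ satisfies $F(y,x^*) \leq 0$ for all $y \in C$ by lower semicontinuity of $F(y,\cdot)$. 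Ruling out $d(x_r,z_0) \to \infty$ is exactly where the coercivity hypothesis \ref{C5} is needed: applied to $\{x_{r_n}\}$ with $r_n \to \infty$, it produces $x^* \in C$ with $F(x_{r_n},x^*) \leq 0$ for large $n$, which combined with the Minty inclusion $F(x^*,x_{r_n}) \leq 0$ (once $r_n \geq d(z_0,x^*)$) and a Minty-type interpolation along the geodesic from $x^*$ to $x_{r_n}$ forces a contradiction with \ref{C1}.

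The final step is the classical Minty-to-equilibrium transfer. Given $x^*$ with $F(y,x^*) \leq 0$ for all $y \in C$, fix $y \in C$ and set $y_t \coloneq \exp_{x^*}(t\exp_{x^*}^{-1}y)$ for $t \in (0,1]$. Under \ref{C3}, the geodesic convexity of $F(y_t,\cdot)$ applied along the geodesic from $x^*$ to $y$ gives
\begin{equation*}
0 = F(y_t,y_t) \leq (1-t) F(y_t,x^*) + t F(y_t,y),
\end{equation*}
whence $F(y_t,y) \geq 0$, and letting $t \to 0^+$ together with the upper semicontinuity of $F(\cdot,y)$ from \ref{C2} yields $F(x^*,y) \geq 0$. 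The main obstacle will be part (ii), where only \hyperref[C33]{(C3*)} is available and the displayed convexity inequality is unavailable; the workaround is to apply the geodesic convexity of the strict sublevel set $\{z \in C \colon F(y_t,z) < 0\}$ directly to the pair $(x^*,y)$, contradicting $F(y_t,y_t) = 0$ whenever both values are strictly negative, and handling the boundary case $F(y_t,x^*) = 0$ by perturbing the parameter along a subsequence and invoking \ref{C2}.
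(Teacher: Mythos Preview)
The paper does not supply a proof of this theorem; it is quoted from \cite{Bento2022} (their Theorem~4.1) and used as a black box in Corollary~\ref{UniqCor}. There is therefore no in-paper argument to compare your proposal against.

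As a standalone sketch, the KKM framework is the right one, and your verification of the KKM property for the Minty map $G(y)=\{x\in C:F(y,x)\le 0\}$ is correct under both \ref{C3} and \hyperref[C33]{(C3*)}. Two steps, however, are not yet closed. First, the coercivity step: from \ref{C5} you obtain $F(x_{r_n},x^*)\le 0$, and from the truncated Minty property $F(x^*,x_{r_n})\le 0$ once $x^*$ lies in the ball; these two inequalities are mutually consistent and compatible with \ref{C1} and \ref{C4}, so the unspecified ``Minty-type interpolation along the geodesic from $x^*$ to $x_{r_n}$'' must supply a genuine extra argument that you have not given. Second, the Minty-to-equilibrium transfer under \hyperref[C33]{(C3*)}: your argument handles the case $F(y_t,x^*)<0$, but the boundary case $F(y_t,x^*)=0$ (which the Minty inequality does not exclude) is dealt with only by a vague perturbation appeal to \ref{C2}, and upper semicontinuity in the first argument does not by itself break a tie in the second. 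A cleaner route for part~(ii) is to bypass Minty entirely and run KKM directly on $H(y)=\{x\in C:F(x,y)\ge 0\}$: these sets are closed by \ref{C2}, and the KKM property follows immediately from \ref{C1} together with the geodesic convexity of $\{y:F(x,y)<0\}$ granted by \hyperref[C33]{(C3*)} (or by \ref{C3}), so no transfer step is needed. The coercivity issue remains in either formulation and is where the substantive content of \ref{C5} lies; for a complete treatment you should consult \cite{Bento2022}.
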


\subsection{Bregman distances in Hadamard manifolds}

Let $\mathcal{M}$ be an Hadamard manifold and let $Z$ be a nonempty, open and  geodesically convex subset of $\M$ such that $\bar{Z} \subseteq \dom(\phi)$, where $\phi \colon  \mathcal{M} \to (-\infty, \infty]$ is a proper function which is differentiable on $Z$. We define a function $D_\phi \colon  \bar{Z} \times Z \subseteq \M \times \M \to (-\infty, \infty]$ by
\begin{equation}\label{BregFormula}
D_\phi(x, y) = \phi(x) - \phi(y) - \langle \operatorname{grad} \phi(y), \exp_y^{-1} x \rangle.
\end{equation}
We also define the partial level sets of $D_\phi$ by:
\[
L_{\alpha, y} = \{x \in \bar{Z}  \colon  D_\phi(x, y) \leq \alpha\}, \quad 
R_{x, \alpha} = \{y \in Z  \colon  D_\phi(x, y) \leq \alpha\}.
\]
The set $L_{\alpha, y}$ is called the left level set for each $\alpha \geq 0$ and for all $y \in \operatorname{dom}(\phi)$, while $R_{x, \alpha}$ is called the right level set for each $\alpha \geq 0$ and for all $x \in \operatorname{dom}(\phi)$.

\begin{definition}[\cite{PapaJOGO2013}] \label{BregDef}
Let $\M$ be  an Hadamard manifold. A proper and  lower semicontinuous function $\phi \colon \M \to (-\infty,\infty]$ is called a Bregman function if there exists a nonempty, open and geodesically convex set $Z$ such that $\bar{Z}\subseteq\dom \phi$ and  the following conditions hold:
\begin{enumerate}
	\item[(a)]  $\phi$ is continuous on $\bar{Z}$;
	\item[(b)] $\phi$ is strictly geodesically convex on $\bar{Z}$;
	\item[(c)]  $\phi$ is continuously differentiable on $Z$.
\end{enumerate}
If $\phi$ is a Bregman function, then  $D_\phi(x,y)$ is called the Bregman distance. We refer to the set $Z$ as the zone of the function $\phi$.
\end{definition}

Note that $D_\phi$ is not a distance in the usual sense. In general the triangle inequality and the symmetry property do not hold. 

\begin{remark}
\label{BregLemmaGrad}
Let $\M$ be  an Hadamard manifold and let $\phi \colon \M \to (-\infty,\infty]$ be a Bregman function with zone $Z$. Then
$D_\phi(x, y) \geq 0$ for all $x \in \bar{Z}$ and $y \in Z$,  and $D_\phi(x, y) = 0$ if and only if $x = y$.
\end{remark}

We also consider the following set of assumptions on the Bregman function $\phi$:
\begin{enumerate}[label=(B\arabic*)]
\item \label{B1}For each $\alpha \geq 0$, the right level sets $R_{x, \alpha}$ are bounded for each $y \in Z$ and $x \in \bar{Z}$;
\item  \label{B2} If $\lim_{n \to \infty} y_n = y^* \in \bar{Z}$, then $\lim_{n \to \infty} D_\phi(y^*, y_n) = 0$;
\item  \label{B3} If $\lim_{n \to \infty} D_\phi(z_n, y_n) = 0$, $\lim_{n \to \infty} y_n = y^* \in \bar{Z}$, and the sequence $\{z_n\}$ is bounded, then $\lim_{n \to \infty} z_n = y^*$.
\end{enumerate}

\begin{remark}
If $Z=\M$, then it follows from \textup{\cite[Proposition 4.3]{PapaJCA2009}} that Conditions \textup{\ref{B2}} and \textup{\ref{B3}} hold for any Bregman function $\phi$.
\end{remark}

\begin{remark}
Let $x,y,z\in \M$. Using \eqref{BregFormula} and Remark \ref{re101}, we have
\begin{align}
	D_\phi(x,y)+D_\phi(y,x)&=\phi(x)-\phi(y)- \langle \operatorname{grad} \phi(y), \exp_y^{-1} x \rangle  +\phi(y)-\phi(x)- \langle \operatorname{grad} \phi(x), \exp_x^{-1} y \rangle \nonumber\\
	&=- \langle \operatorname{grad} \phi(y), \exp_y^{-1} x \rangle  - \langle \operatorname{grad} \phi(x), \exp_x^{-1} y \rangle\nonumber\\
	&=\langle \Pt{x \la y}\operatorname{grad} \phi(y), \exp_x^{-1} y \rangle  - \langle \operatorname{grad} \phi(x), \exp_x^{-1} y \rangle\nonumber\\
	&=-\langle  \operatorname{grad} \phi(x)-\Pt{x \la y}\operatorname{grad} \phi(y), \exp_x^{-1} y \rangle. \label{D+D}
\end{align}
Similarly,
\begin{align}
	&	D_\phi(x,y)-D_\phi(x,z)-D_\phi(z,y)\nonumber\\
	&=-\langle \grad \phi(y), \exp_y^{-1}x \rangle +\langle \grad \phi(z), \exp_z^{-1}x \rangle+\langle \grad \phi(y), \exp_y^{-1}z \rangle.\label{D2}
\end{align}
\end{remark}

\begin{remark}\label{distanceBregman}
Let $x_0\in \M$ and let $\phi \colon \M \to \R$ be defined by $\phi(x)=\frac{1}{2}d^2(x,x_0)$ for all $x\in \M$. It follows from Remarks \ref{dist-grad-rem} and \ref{Properties} (i) that 
\begin{align*}
	D_\phi(x,y)&=\frac{1}{2}d^2(x,x_0)-\frac{1}{2}d^2(y,x_0)-\langle \grad \phi(y) , \exp_y^{-1}x\rangle\\
	&=\frac{1}{2}d^2(x,x_0)-\frac{1}{2}d^2(y,x_0)+\langle \exp_y^{-1}x_0, \exp_y^{-1}x\rangle\\
	&=\frac{1}{2}(d^2(x,x_0)-d^2(y,x_0)+2\langle \exp_y^{-1}x_0, \exp_y^{-1}x\rangle)\\
	& \geq \frac{1}{2}d^2(x,y) \text{ for all } x,y \in \M.
\end{align*}
%
If the sectional curvature of $\M$ is zero, then
\[D_\phi(x,y)=\frac{1}{2} d^2(x,y)  \text{ for all } x,y \in \M.\]
\end{remark}

\section{A Bregman Regularization for Equilibrium Problems}

Let $\M$ be an Hadamard manifold and let $C$ be a nonempty, closed and geodesically convex subset of $\M$ such that $C\subset \intr\dom(\phi)$, where
$\phi \colon \M \to (-\infty,\infty]$ is a Bregman function. Let $F \colon C \times C \to \R$ be a bifunction.
Fix $\bar{x}\in C$ and $\lambda>0$. Then the  Bregman regularization of the bifunction $F$ is  denoted by  $\tilde{F} \colon C\times C \to \mathbb{R}$ and defined by
\begin{equation}\label{RegularProb}
\tilde{F}(x,y)=F(x,y)+\lambda(D_\phi(y,\bar{x})-D_\phi(y,x)-D_\phi(x,\bar{x}))  \text{ for all } x,y \in \M.
\end{equation}

\begin{assumption}\label{AssumpExist}
Let $C$ be a nonempty, closed and geodesically convex subset of an Hadamard manifold   $\M$ such that $C\subset\intr\dom(\phi)$, where
$\phi \colon \M \to (-\infty,\infty]$ is a Bregman function. Take $\bar{x}\in C$   and let $F \colon C \times C \to \R$ be a bifunction.
Then for each $x\in C$, the set 
\[K_x=\{y\in C \colon \tilde{F}(x,y)<0\}\]
is geodesically convex. 
\end{assumption}

\begin{remark}
In general, the set $K_x$ need not be geodesically convex; see Appendix~\ref{Ap2} for a counterexample.
\end{remark}

\begin{proposition}[\cite{kristaly2016convexities, NetoNote2017}] \label{ZeroConvBreg}
Let $\M$ be an Hadamard manifold of zero sectional curvature and let $C$ be a nonempty, closed and geodesically convex subset of $\M$ such that $C\subset\intr\dom(\phi)$, where
$\phi \colon \M \to (-\infty,\infty]$ is a Bregman function. Take $\bar{x}\in C$   and let $F \colon C \times C \to \R$ be a bifunction such that $y \mapsto F(x,y)$ is geodesically convex for each $x\in C$.
Then for each $x\in C$, the set 
\(K_x=\{y\in C \colon \tilde{F}(x,y)<0\}\) 	is geodesically convex. 
\end{proposition}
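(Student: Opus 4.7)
The plan is to exploit the fact that a Hadamard manifold of zero sectional curvature is isometric to Euclidean space via the exponential map, so that the obstruction that makes $D_\phi(y,\bar x)-D_\phi(y,x)$ generically nonconvex in $y$ disappears. Concretely, I will show that under the zero-curvature assumption the map $y\mapsto \tilde F(x,y)$ is geodesically convex, and then invoke the elementary fact that a strict sublevel set of a geodesically convex function is geodesically convex to conclude.

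First I would rewrite $\tilde F(x,\cdot)$ by separating the $y$-dependence. Using \eqref{BregFormula} with $\operatorname{grad}\phi$ evaluated at $\bar x$ and at $x$, the $\phi(y)$ contributions in $D_\phi(y,\bar x)$ and $D_\phi(y,x)$ cancel, so that
\begin{equation*}
D_\phi(y,\bar x)-D_\phi(y,x)=\phi(x)-\phi(\bar x)-\langle \operatorname{grad}\phi(\bar x),\exp_{\bar x}^{-1}y\rangle+\langle \operatorname{grad}\phi(x),\exp_x^{-1}y\rangle.
\end{equation*}
Noting that $D_\phi(x,\bar x)$ does not depend on $y$, this reduces the question to the geodesic convexity in $y$ of $F(x,y)$ plus a constant plus two terms of the form $y\mapsto\langle v,\exp_z^{-1}y\rangle$ with $v\in\T_z\M$ fixed.

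The key step is to show that each such function $y\mapsto\langle v,\exp_z^{-1}y\rangle$ is actually geodesically \emph{affine} (hence both convex and concave) when $\M$ has zero sectional curvature. By the Cartan–Hadamard theorem together with the flatness hypothesis, $\M$ is isometric to the Euclidean space $\R^m$, so for every $z\in\M$ the inverse exponential $\exp_z^{-1}$ corresponds to the identification $y\mapsto y-z$ in $\R^m$, which sends geodesics $t\mapsto\gamma(t)$ in $\M$ to affine segments in $\T_z\M$. Consequently $t\mapsto \langle v,\exp_z^{-1}\gamma(t)\rangle$ is an affine function of $t$, proving the claim. This is precisely the property that fails for nonzero curvature, as documented in \cite{kristaly2016convexities,NetoNote2017}, and is the main technical point on which the proposition hinges.

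Putting everything together, $\tilde F(x,\cdot)$ is the sum of the geodesically convex function $F(x,\cdot)$ (by hypothesis) and a geodesically affine function (plus a $y$-independent constant), and so is itself geodesically convex on $C$. Hence for any geodesic $\gamma$ in $C$ and $t\in[0,1]$ one has $\tilde F(x,\gamma(t))\le (1-t)\tilde F(x,\gamma(0))+t\tilde F(x,\gamma(1))$, so if both endpoints lie in $K_x=\{y\in C:\tilde F(x,y)<0\}$, then $\tilde F(x,\gamma(t))<0$, which shows $K_x$ is geodesically convex. The only nontrivial point is the isometric identification with $\R^m$ and the resulting affinity of $y\mapsto\langle v,\exp_z^{-1}y\rangle$; once this is in hand, the conclusion is immediate.
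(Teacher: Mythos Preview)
Your argument is correct. The paper does not give its own proof of this proposition; it simply records the statement with a citation to \cite{kristaly2016convexities, NetoNote2017}, so there is nothing to compare against directly. That said, your route is exactly the standard one implicit in those references: cancel the $\phi(y)$ terms so that the $y$-dependence of the regularizer reduces to terms of the form $y\mapsto\langle v,\exp_z^{-1}y\rangle$, observe that such maps are geodesically affine precisely when the sectional curvature vanishes (by the Cartan--Hadamard identification of a flat Hadamard manifold with $\R^m$), and conclude that $\tilde F(x,\cdot)$ is geodesically convex, whence its strict sublevel set $K_x$ is geodesically convex. Nothing is missing.
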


\begin{remark}
 Proposition~\ref{ZeroConvBreg} shows that on Hadamard manifolds with zero sectional curvature, the set $K_x$ is guaranteed to be geodesically convex. 
For Hadamard manifolds with nonzero sectional curvature, this property does not generally hold. However, there exist examples where $K_x$ remains convex even in the nonzero curvature case; see Appendix~\ref{Ap1}.

Moreover, our examples demonstrate that even for the same monotone bifunction, a change in the Bregman function may alter the convexity of $K_x$; see Appendix~\ref{Ap1} and \ref{Ap2} . This shows that the convexity of $K_x$ depends not only on the curvature of the manifold but also on the choice of the Bregman function.

At present, no general conditions are known that ensure the geodesic convexity of $K_x$ in the nonzero curvature setting. Consequently, this property must generally be verified for each particular problem.

These observations indicate that curvature alone does not fully characterize the convexity of $K_x$. At present, the understanding of this property is mainly based on specific examples, and a general characterization remains open.
\end{remark}

\begin{proposition}\label{EPRegCond} 
Let $\M$ be an Hadamard manifold and let $C$ be a nonempty, closed and geodesically convex subset of $\M$ such that $C\subset\intr\dom(\phi)$, where
$\phi \colon \M \to (-\infty,\infty]$ is a Bregman function. Take $\bar{x}\in C$   and let $F \colon C \times C \to \R$ be a bifunction satisfying  Conditions \textup{\ref{C1}}-\textup{\ref{C4}}. Then the following statements hold:
\begin{itemize}
	\item[(i)] The bifunction $\tilde{F}$ satisfies Conditions \textup{\ref{C1}}, \textup{\ref{C2}}	
	and \textup{\ref{C4}}.
	\item[(ii)] If, in addition, $F$ satisfies Assumption~\ref{AssumpExist}, then $\tilde{F}$ satisfies Condition~\textup{\hyperref[C33]{(C3*)}}.
	\item[(iii)] Moreover, for a given $y_0\in C$, if for every sequence $\{y_n\}\subset C$ such that $\lim_{n \to \infty}d(y_n,y_0)=\infty$, we have 
	\begin{enumerate}[label=(C\arabic*),start=6,font=\upshape]
		\item \label{CondA6} $\liminf_{n \to \infty}(F(\bar{x},y_n)+\lambda(D_\phi(\bar{x},y_n)+D_\phi(y_n,\bar{x})))>0,$
	\end{enumerate}
	then  $\tilde{F}$ satisfies Condition \textup{\ref{C5}}.
\end{itemize}
\end{proposition}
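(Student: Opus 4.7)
The plan is to address the three parts in order; each reduces to direct manipulation of \eqref{RegularProb} and the basic properties of $D_\phi$. For part~(i), \ref{C1} follows by substituting $y=x$ and using $F(x,x)=0$ together with $D_\phi(x,x)=0$ (Remark~\ref{BregLemmaGrad}), which makes the bracket collapse. For \ref{C2}, with $y$ fixed, $F(\cdot,y)$ is upper semicontinuous by hypothesis while the regularization is continuous in $x$: indeed, $D_\phi(y,\bar x)$ is a constant in $x$, and $D_\phi(y,x)$ and $D_\phi(x,\bar x)$ depend continuously on $x$ via continuity of $\phi$, continuous differentiability of $\phi$ on its zone, and Lemma~\ref{exp-rem}. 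For \ref{C4}, a direct computation shows that all the $D_\phi(\cdot,\bar x)$ terms cancel, yielding
\[
	\tilde F(x,y)+\tilde F(y,x) = F(x,y)+F(y,x) - \lambda\bigl(D_\phi(x,y)+D_\phi(y,x)\bigr) \leq 0,
\]
by monotonicity of $F$ and nonnegativity of $D_\phi$ (Remark~\ref{BregLemmaGrad}).

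For part~(ii), condition~\hyperref[C33]{(C3*)} asks for geodesic convexity of $K_x$, which is exactly Assumption~\ref{AssumpExist}, and for lower semicontinuity of $y\mapsto \tilde F(x,y)$, which follows because $F(x,\cdot)$ is lower semicontinuous by \ref{C3} and the regularization is continuous in $y$ (each of $D_\phi(y,\bar x)$ and $-D_\phi(y,x)$ is continuous in $y$, while $-D_\phi(x,\bar x)$ is a constant). For part~(iii), I would set $x^*=\bar x$, exploiting $D_\phi(\bar x,\bar x)=0$. From \eqref{RegularProb} and monotonicity of $F$,
\[
	\tilde F(y_n,\bar x) = F(y_n,\bar x) - \lambda\bigl(D_\phi(\bar x,y_n)+D_\phi(y_n,\bar x)\bigr) \leq -\Bigl[F(\bar x, y_n)+\lambda\bigl(D_\phi(\bar x, y_n)+D_\phi(y_n,\bar x)\bigr)\Bigr].
\]
Hypothesis~\ref{CondA6} asserts that the bracketed quantity has strictly positive $\liminf$ along $\{y_n\}$, so it is eventually positive; choosing $n_0$ accordingly yields $\tilde F(y_n,\bar x)\leq 0$ for all $n\geq n_0$.

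The step that actually requires insight is the choice $x^*=\bar x$ in part~(iii): it is precisely the reference point that makes the extraneous Bregman term $D_\phi(x^*,\bar x)$ vanish and that aligns the inequality so that monotonicity converts $F(y_n,\bar x)$ into $-F(\bar x, y_n)$, after which the remaining Bregman residues match exactly the combination featured in \ref{CondA6}. Any other choice of $x^*$ would leave cross-terms whose sign is not controllable on Hadamard manifolds of nonzero curvature, which is also why \ref{CondA6} is stated in this specific form. The remaining ingredients — the cancellation underlying \ref{C4}, and the continuity arguments for \ref{C2} and for the lower semicontinuity half of \hyperref[C33]{(C3*)} — are routine once \eqref{RegularProb}, Remark~\ref{BregLemmaGrad}, and Lemma~\ref{exp-rem} are available.
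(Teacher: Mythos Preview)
Your proposal is correct and matches the paper's proof essentially step for step: the same cancellation for \ref{C1} and \ref{C4}, the same continuity argument via Lemma~\ref{exp-rem} and the $C^1$-property of $\phi$ for \ref{C2} and the lower semicontinuity half of \hyperref[C33]{(C3*)}, and the same choice $x^*=\bar x$ with monotonicity of $F$ for part~(iii). The only cosmetic difference is that the paper unpacks the regularization via identity~\eqref{D2} before invoking continuity, whereas you argue continuity of each $D_\phi$-term directly; both are equivalent.
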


\begin{proof}
(i)	It follows from Condition \textup{\ref{C1}} that  $\tilde{F}(x,x)=F(x,x)+\lambda_n(D_\phi(x,\bar{x})-D_\phi(x,x)-D_\phi(x,\bar{x}))=0$. Thus $\tilde{F}$ satisfies Condition \textup{\ref{C1}}.
To show that Condition \textup{\ref{C2}} holds for $\tilde{F}$, using \eqref{D2}, we prove that the map 
\[x\mapsto   -\langle \grad \phi(\bar{x}), \exp_{\bar{x}}^{-1}y \rangle +\langle \grad \phi(x), \exp_x^{-1}y \rangle+\langle \grad \phi(\bar{x}), \exp_{\bar{x}}^{-1}x \rangle\]
is continuous at every $x\in C$. To this end, let $\{x_n\}$  be a sequence in $C$ converging to $x\in C$. Since $\phi$ is continuously differentiable, from Lemma \ref{exp-rem}, we have $\langle \grad \phi(x_n), \exp_{x_n}^{-1}y \rangle \to \langle\grad \phi(x), \exp_x^{-1}y \rangle$ and $\langle \grad \phi(\bar{x}), \exp_{\bar{x}}^{-1}x_n \rangle \to \langle \grad \phi(\bar{x}), \exp_{\bar{x}}^{-1}x \rangle$.
Therefore the upper semicontinuity of  $F(\cdot,y)$ implies the upper semicontinuity of  $\tilde{F}(\cdot,y)$ and Condition \textup{\ref{C2}} holds for $\tilde{F}$.
We claim now that $\tilde{F}$ satisfies Condition \textup{\ref{C4}}. Indeed, from the monotonicity of $F$ it follows that 
\begin{align*}
	\tilde{F}(x,y)+\tilde{F}(y,x)&=F(x,y)+F(y,x)- \lambda(D_\phi(y,\bar{x})-D_\phi(y,x)-D_\phi(x,\bar{x})) \\
	& \quad -\lambda(D_\phi(x,\bar{x})-D_\phi(x,y)-D_\phi(y,\bar{x})).\\
	&=F(x,y)+F(y,x)- \lambda(D_\phi(y,x)+D_\phi(x,y))\leq 0.
\end{align*}
(ii) It follows from Assumption \ref{AssumpExist} that the set 
\[\{y\in C \colon \tilde{F}(x,y)<0\}\]
is geodesically convex.  The lower semicontinuity of $\tilde{F}(x,\cdot)$ follows from Lemma \ref{exp-rem} and  the fact the $F(x,\cdot)$ is lower semicontinuous.  Hence Condition \textup{\hyperref[C33]{(C3*)}} holds for $\tilde{F}$.\\
(iii)	Next we show that $\tilde{F}$ satisfies Condition \textup{\ref{C5}}. Using  the monotonicity of $F$, we infer that
\begin{align*}
	\tilde{F}(y_n,\bar{x})&=F(y_n,\bar{x})+\lambda(D_\phi(\bar{x},\bar{x})-D_\phi(\bar{x},y_n)-D_\phi(y_n,\bar{x}))\\
	&= F(y_n,\bar{x})-\lambda(D_\phi(\bar{x},y_n)+D_\phi(y_n,\bar{x}))\\
	& \leq -F(\bar{x},y_n)-\lambda(D_\phi(\bar{x},y_n)+D_\phi(y_n,\bar{x}))\\
	&=-(F(\bar{x},y_n)+\lambda(D_\phi(\bar{x},y_n)+D_\phi(y_n,\bar{x}))).
\end{align*}
By Condition \textup{\ref{CondA6}} there  exists, $n_0\in \N$ such that  $(F(\bar{x},y_n)+\lambda(D_\phi(\bar{x},y_n)+D_\phi(y_n,\bar{x})))>0$ for all $n \geq n_0$. This implies that property \textup{\ref{C5}} holds for $\tilde{F}$.
\end{proof}

\begin{corollary}\label{UniqCor}
Let $\M$ be an Hadamard manifold and let $C$ be a nonempty, closed and geodesically convex subset of $\M$ such that $C\subset\intr\dom(\phi)$, where
$\phi \colon \M \to (-\infty,\infty]$ is a Bregman function. Take $\bar{x}\in C$   and let $F \colon C \times C \to \R$ be a bifunction satisfying Conditions \textup{\ref{C1}}-\textup{\ref{C4}} and \textup{\ref{CondA6}}. If, in addition, $F$ satisfies Assumption~\ref{AssumpExist},   then $\ep(\tilde{F},C)$ has a unique solution.
\end{corollary}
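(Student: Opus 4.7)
The plan is to split the proof into two parts: existence and uniqueness. For existence, I would simply invoke Theorem~\ref{ep-exist}(ii) applied to the regularized bifunction $\tilde{F}$. To do this, I must verify that $\tilde{F}$ satisfies Conditions \textup{\ref{C1}}, \textup{\ref{C2}}, \textup{\hyperref[C33]{(C3*)}}, \textup{\ref{C4}} and \textup{\ref{C5}}. But Proposition~\ref{EPRegCond}(i) already gives \textup{\ref{C1}}, \textup{\ref{C2}} and \textup{\ref{C4}}; Proposition~\ref{EPRegCond}(ii), together with Assumption~\ref{AssumpExist}, gives \textup{\hyperref[C33]{(C3*)}}; and Proposition~\ref{EPRegCond}(iii), combined with the hypothesis that $F$ satisfies \textup{\ref{CondA6}}, gives \textup{\ref{C5}}. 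Hence the existence part reduces to citing the preceding proposition.

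For the uniqueness part, my strategy is to exploit the strict positivity of the Bregman distance off the diagonal. Suppose, for contradiction, that $x_1^*, x_2^* \in \Omega(\tilde{F}, C)$. Then
\begin{equation*}
\tilde{F}(x_1^*, x_2^*) \geq 0 \quad \text{and} \quad \tilde{F}(x_2^*, x_1^*) \geq 0.
\end{equation*}
Adding these two inequalities and applying the identity used in the proof of Proposition~\ref{EPRegCond}(i), namely
\begin{equation*}
\tilde{F}(x,y) + \tilde{F}(y,x) = F(x,y) + F(y,x) - \lambda\bigl(D_\phi(x,y) + D_\phi(y,x)\bigr),
\end{equation*}
together with the monotonicity of $F$, yields
\begin{equation*}
0 \leq \tilde{F}(x_1^*, x_2^*) + \tilde{F}(x_2^*, x_1^*) \leq -\lambda\bigl(D_\phi(x_1^*, x_2^*) + D_\phi(x_2^*, x_1^*)\bigr).
\end{equation*}
Since $C \subset \intr\dom(\phi) \subseteq Z$, both Bregman distances are well defined and nonnegative by Remark~\ref{BregLemmaGrad}, and $\lambda > 0$, so $D_\phi(x_1^*, x_2^*) = D_\phi(x_2^*, x_1^*) = 0$. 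Invoking Remark~\ref{BregLemmaGrad} once more forces $x_1^* = x_2^*$, which proves uniqueness.

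I do not anticipate a genuine obstacle: the corollary is essentially a clean consequence of Proposition~\ref{EPRegCond} together with the strict geodesic convexity of the Bregman function $\phi$ (which is what makes $D_\phi(x,y) = 0 \Longleftrightarrow x = y$ the decisive ingredient for uniqueness). The only subtlety worth flagging is to ensure that the points $x_1^*, x_2^*$ lie in the zone $Z$ where $D_\phi$ behaves like a genuine distance in the sense of Remark~\ref{BregLemmaGrad}; this is guaranteed by the standing hypothesis $C \subset \intr\dom(\phi)$.
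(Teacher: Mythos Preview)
Your proposal is correct and follows essentially the same approach as the paper: existence via Proposition~\ref{EPRegCond} combined with Theorem~\ref{ep-exist}(ii), and uniqueness by adding the two defining inequalities for $\tilde{F}$, invoking monotonicity of $F$, and using Remark~\ref{BregLemmaGrad} to conclude $x_1^*=x_2^*$ from $D_\phi(x_1^*,x_2^*)+D_\phi(x_2^*,x_1^*)=0$. The paper expands $\tilde{F}(x_1,x_2)$ and $\tilde{F}(x_2,x_1)$ explicitly rather than citing the identity from Proposition~\ref{EPRegCond}(i), but this is purely cosmetic.
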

\begin{proof}
It follows from Proposition \ref{EPRegCond} that $\tilde{F}$ satisfies Conditions \textup{\ref{C1}}, \textup{\ref{C2}}, 	\textup{\hyperref[C33]{(C3*)}}, \textup{\ref{C4}} and \textup{\ref{C5}}. Then by Theorem \ref{ep-exist} (ii), $\ep(\tilde{F},C)$ has a solution.
Now we show that this solution is unique. Suppose that $x_1,x_2\in \ep(\tilde{F},C)$. Then 
\begin{equation}\label{Uniqeq1}
	0\leq \tilde{F}(x_1,x_2)=F(x_1,x_2)+\lambda(D_\phi(x_2,\bar{x})-D_\phi(x_2,x_1)-D_\phi(x_1,\bar{x})),
\end{equation}
\begin{equation}\label{Uniqeq2}
	0\leq \tilde{F}(x_2,x_1)=F(x_2,x_1)+\lambda(D_\phi(x_1,\bar{x})-D_\phi(x_1,x_2)-D_\phi(x_2,\bar{x}))
\end{equation}
Adding \eqref{Uniqeq1} and \eqref{Uniqeq2} and using the monotonicity of $F$, we see that 
\begin{align*}
	0&\leq F(x_1,x_2)+F(x_2,x_1)-\lambda(D_\phi(x_1,x_2)+D_\phi(x_2,x_1))\\
	& \leq -\lambda(D_\phi(x_1,x_2)+D_\phi(x_2,x_1)) < 0,
\end{align*}
which implies that 
\[D_\phi(x_1,x_2)+D_\phi(x_2,x_1)=0.\]
Hence $x_1=x_2$. 
\end{proof}

\begin{proposition} \label{Coercive}
Let $\M$ be an Hadamard manifold and let $C$ be a nonempty, closed and geodesically convex subset of $\M$ such that $C\subset\intr\dom(\phi)$, where
$\phi \colon \M \to (-\infty,\infty]$ is a Bregman function. Take $\bar{x}\in C$   and let $F \colon C \times C \to \R$ be a bifunction.
Let $\phi$ be $1$-coercive. Then Condition \textup{\ref{CondA6}} holds.
\end{proposition}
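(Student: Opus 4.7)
The plan is to leverage the 1-coercivity of $\phi$ to produce super-linear growth of $D_\phi(y_n,\bar{x})$ in $d(\bar{x},y_n)$, which then dominates the at-most-linear decay allowed by the convexity of $F(\bar{x},\cdot)$. I would start by observing that $d(\bar{x},y_n)\to\infty$: from the triangle inequality $d(\bar{x},y_n)\geq d(y_n,y_0)-d(\bar{x},y_0)$, and the hypothesis $d(y_n,y_0)\to\infty$. Moreover, 1-coercivity at the base point $x_0$ transfers to $\bar{x}$ because $d(x_0,y_n)/d(\bar{x},y_n)\to 1$ once both distances diverge, so $\phi(y_n)/d(\bar{x},y_n)\to\infty$.

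Next, I would bound $D_\phi(y_n,\bar{x})$ from below by a super-linear quantity. From the definition
$$D_\phi(y_n,\bar{x})=\phi(y_n)-\phi(\bar{x})-\langle \grad\phi(\bar{x}),\exp_{\bar{x}}^{-1}y_n\rangle,$$
the Cauchy--Schwarz inequality yields
$$D_\phi(y_n,\bar{x})\geq \phi(y_n)-\phi(\bar{x})-\|\grad\phi(\bar{x})\|\,d(\bar{x},y_n).$$
Dividing by $d(\bar{x},y_n)$ and invoking 1-coercivity gives $D_\phi(y_n,\bar{x})/d(\bar{x},y_n)\to\infty$. Since $D_\phi(\bar{x},y_n)\geq 0$ by Remark \ref{BregLemmaGrad}, the sum $D_\phi(\bar{x},y_n)+D_\phi(y_n,\bar{x})$ grows super-linearly in $d(\bar{x},y_n)$.

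For the $F$-contribution, I would use the standing convexity of $F(\bar{x},\cdot)$ (Condition \ref{C3}) together with $F(\bar{x},\bar{x})=0$: picking any subgradient $u\in\partial F(\bar{x},\cdot)(\bar{x})$ yields $F(\bar{x},y_n)\geq \langle u,\exp_{\bar{x}}^{-1}y_n\rangle\geq -\|u\|\,d(\bar{x},y_n)$. Combining this linear lower bound with the super-linear Bregman bound from the previous paragraph, the quantity $F(\bar{x},y_n)+\lambda(D_\phi(\bar{x},y_n)+D_\phi(y_n,\bar{x}))$ actually tends to $+\infty$, which certainly delivers the required strict positivity of the liminf. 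The main delicate step is precisely this control of $F(\bar{x},y_n)$ from below: the super-linear growth coming from 1-coercivity is the routine part, while the at-most-linear decay of $F(\bar{x},\cdot)$ rests on the geodesic convexity hypothesis and the existence of a subgradient at $\bar{x}$.
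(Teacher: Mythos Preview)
Your overall strategy matches the paper's: show that the Bregman term grows super-linearly in the distance while $F(\bar{x},y_n)$ decays at most linearly, the latter via a subgradient inequality for the convex function $F(\bar{x},\cdot)$. Your treatment of the Bregman part is in fact tidier than the paper's --- you bound $D_\phi(y_n,\bar{x})$ directly from the definition and Cauchy--Schwarz, whereas the paper manipulates the sum $D_\phi(\bar{x},x_n)+D_\phi(x_n,\bar{x})$ using the convexity of $\phi$ before invoking $1$-coercivity. Both routes give the same super-linear growth.

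There is, however, one genuine gap in your argument. You write ``picking any subgradient $u\in\partial F(\bar{x},\cdot)(\bar{x})$'', but nothing in the hypotheses ensures that this subdifferential is nonempty: $\bar{x}$ is only assumed to lie in the \emph{closed} set $C$, and for a geodesically convex lower semicontinuous function the subdifferential may be empty at a boundary point (think of $f(y)=-\sqrt{y}$ on $[0,1]$ at $y=0$). The paper handles precisely this issue: it extends $F(\bar{x},\cdot)$ by $+\infty$ outside $C$, observes that $\partial F(\bar{x},\cdot)$ is a maximal monotone vector field (\cite[Theorem~5.1]{Li2009JLM}), and since the trivially empty operator is not maximal monotone, concludes that $\dom\partial F(\bar{x},\cdot)\cap C\neq\emptyset$. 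This produces some $\hat{x}\in C$ and $v\in\partial F(\bar{x},\hat{x})$, yielding $F(\bar{x},y_n)\geq F(\bar{x},\hat{x})-\|v\|\,d(\hat{x},y_n)$, which is still an affine lower bound and lets the rest of your argument go through unchanged (after noting that $d(\hat{x},y_n)\to\infty$ as well). So the repair is local, but the step you flagged as ``delicate'' is indeed not automatic and requires this extra work.
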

\begin{proof}
We first claim that $\dom \partial F(\bar{x},\cdot)\cap C\neq \emptyset$. Indeed, the subdifferential of a proper, geodesically convex and lower semicontinuous function is a maximal monotone vector field on $\M$ (\cite[Theorem 5.1]{Li2009JLM}). If we extend the function $F(\bar{x},\cdot)$ to $\M$ by defining it as $\infty$ outside $C$, then $\partial F(\bar{x},\cdot)$ is a maximal monotone vector field. On the other hand, the operator $T$ defined by $T(x)=\emptyset$ for all $x\in \M$ is obviously not maximal monotone because its graph is monotone and strictly contained in the graph of every nontrivial maximal monotone vector field. Thus $\dom \partial F(\bar{x},\cdot)$ must be nonempty. Since $\partial F(\bar{x},z)=\emptyset$ for each $z\notin C$, it follows that $\dom \partial F(\bar{x},\cdot) \cap C\neq \emptyset$ must hold. Hence the claim is true and there exists $v\in \partial F(\bar{x},\hat{x})$ for some $\hat{x}\in C$. Therefore, the following subgradient inequality holds:
\begin{align}
	F(\bar{x},\hat{x})&\geq F(\bar{x},\hat{x})+\langle v,\exp_{\hat{x}}^{-1}x_n\rangle \nonumber\\
	& \geq  F(\bar{x},\hat{x})-\|v\|\|\exp_{\hat{x}}^{-1}x_n\|\nonumber\\
	& \geq  F(\bar{x},\hat{x})-\|v\|d(x_n,\hat{x}). \label{coereq1}
\end{align}
On the other hand,	take a sequence $\{x_n\}$ in $C$ such that $d(x_n,\hat{x}) \to \infty$. Then, using the convexity of $\phi$, we get
\begin{align}
	D_\phi(\bar{x},x_n)+D_\phi(x_n,\bar{x})&=\langle \grad\phi(x_n), \exp_{x_n}^{-1}\bar{x} \rangle+\langle \grad\phi(\bar{x}), \exp_{\bar{x}}^{-1}x_n \rangle\nonumber\\
	&\geq \langle \grad\phi(x_n), \exp_{x_n}^{-1}\bar{x} \rangle+\phi(x_n)-\phi(\bar{x})\nonumber\\
	&=d(x_n,\hat{x})\left[\frac{\langle \grad\phi(x_n), \exp_{x_n}^{-1}\bar{x} \rangle}{d(x_n,\hat{x})}+\frac{\phi(x_n)}{d(x_n,\hat{x})}-\frac{\phi(\bar{x})}{d(x_n,\hat{x})}\right]. \label{coereq2}
\end{align}
Combining \eqref{coereq1} and \eqref{coereq2}, we have 
\begin{align*}
	&	\liminf_{n \to \infty}[F(\bar{x},x_n)+\lambda(D_\phi(\bar{x},x_n)+D_\phi(x_n,\bar{x}))]\\
	&\geq \liminf_{n \to \infty}\left[F(\bar{x},\hat{x})-\|v\|d(x_n,\hat{x})+\lambda d(x_n,\hat{x})\left(\frac{\langle \grad\phi(x_n), \exp_{x_n}^{-1}\bar{x} \rangle}{d(x_n,\hat{x})}+\frac{\phi(x_n)}{d(x_n,\hat{x})}-\frac{\phi(\bar{x})}{d(x_n,\hat{x})}\right)\right]\\
	&= \liminf_{n \to \infty}\left[F(\bar{x},\hat{x})+\lambda d(x_n,\hat{x})\left(\frac{\langle \grad\phi(x_n), \exp_{x_n}^{-1}\bar{x} \rangle}{d(x_n,\hat{x})}+\frac{\phi(x_n)}{d(x_n,\hat{x})}-\frac{\phi(\bar{x})}{d(x_n,\hat{x})}-\frac{\|v\|}{\lambda}\right)\right].
\end{align*}
The $1$-coercivity assumption implies that the expression in the inner bracket tends to $\infty$ and hence Condition \textup{\ref{CondA6}} is established. 
\end{proof}

\begin{remark}\label{CoerEucld}
The converse of the above result does not hold even in Euclidean spaces; that is, there exists Bregman functions which are not $1$-coercive but satisfy Condition \textup{\ref{CondA6}}; see \cite[Remark 3.1]{Burachik2012}.
\end{remark}

\begin{remark}
Using Proposition \eqref{Coercive} and Remark \ref{CoerEucld}, we see that Condition \textup{\ref{CondA6}} is weaker than the usual coercivity condition. This confirms the assertion made in the abstract that our analysis relies on a strictly weaker coercivity requirement.
\end{remark}

\section{A Bregman Regularized Proximal Point Method}

Using the  regularized problem \eqref{RegularProb}, and the existence and uniqueness of its solution, we propose the following algorithm for solving $\ep(F,C)$:
\begin{algorithm}\label{BregAlg}
Take a bounded sequence of regularization parameters $\{\lambda_n\}\subset (0,\infty)$. \\
Initialization: Choose an initial point $x_0\in C$;\\
Stopping Criterion: Given $x_n$, if $x_{n+1}=x_n$, then stop. Otherwise;\\
Iterative Step: Given $x_n\in C$, $x_{n+1}$ is the unique solution of the problem $\ep(F_{n,\lambda_n},C)$, that is,
\begin{equation}\label{RegFunc}
	x_{n+1}\in\Omega(F_{n,\lambda_n},C) \text{ for all } n \in \N,
\end{equation}
where $F_{n,\lambda_n} \colon C \times C \to \mathbb{R}$ is given by 
\begin{equation}\label{RegFunc2}
	F_{n,\lambda_n}(x,y)=F(x,y)+\lambda_n (D_\phi(y,x_n)-D_\phi(y,x)-D_\phi(x,x_n)) \text{ for all } x, y \in C.
\end{equation}
\end{algorithm}

\begin{proposition}\label{TheoremExist}
Let $\M$ be an Hadamard manifold  and let $C$ be a nonempty, closed and geodesically convex subset of $\M$ such that $C\subset\intr\dom(\phi)$, where
$\phi \colon \M \to (-\infty,\infty]$ is a Bregman function. Take $\bar{x}\in C$   and let $F \colon C \times C \to \R$ be a bifunction such that  Conditions \textup{\ref{C1}}-\textup{\ref{C4}} and  \textup{\ref{CondA6}} hold. Suppose that  Assumption \ref{AssumpExist} holds and $\Omega(F,C)\neq \emptyset$. Let  $\{x_n\}$ be the sequence generated by \eqref{RegFunc}. Then the following statements hold:
\begin{enumerate}
	\item[(i)] $\{x_n\}$ is well defined.
	\item[(ii)] $F(x_{n+1},y)+\lambda_n(D_\phi(y,x_n)-D_\phi(y,x_{n+1}))\geq 0$ \text{ for all } $y \in C$.
	\item[(iii)] $D_\phi(x^*,x_{n+1})\leq D_\phi(x^*,x_n)$ for all $x^*\in \Omega(F,C)$.
	\item[(iv)] $\lim_{n \to \infty} D_\phi(x_{n+1},x_n)=0$.
\end{enumerate}	
\end{proposition}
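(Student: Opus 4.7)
The overall plan is to exploit the fact that at each iteration the function $F_{n,\lambda_n}$ defined in \eqref{RegFunc2} is precisely the Bregman regularization $\tilde F$ of \eqref{RegularProb} with base point $\bar x = x_n$ and parameter $\lambda = \lambda_n$. Thus all four parts reduce to applying the machinery already developed in Proposition~\ref{EPRegCond} and Corollary~\ref{UniqCor} at each step, together with the nonnegativity property of $D_\phi$ from Remark~\ref{BregLemmaGrad} and the monotonicity of $F$. For (i) I would proceed by induction on $n$: starting from $x_0 \in C$, assume that $x_n \in C$ has been constructed. Interpreting Assumption~\ref{AssumpExist} and Condition~\ref{CondA6} as holding for every admissible base point in $C$ (in particular for $\bar x = x_n$), Proposition~\ref{EPRegCond} shows that $F_{n,\lambda_n}$ satisfies \textup{\ref{C1}}, \textup{\ref{C2}}, \textup{\hyperref[C33]{(C3*)}}, \textup{\ref{C4}}, and \textup{\ref{C5}}, and Corollary~\ref{UniqCor} then delivers a unique $x_{n+1} \in \Omega(F_{n,\lambda_n},C)$.

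For (ii), since $x_{n+1}$ solves $\ep(F_{n,\lambda_n},C)$, the defining inequality $F_{n,\lambda_n}(x_{n+1},y)\ge 0$ for all $y \in C$ expands, via \eqref{RegFunc2}, to
\[
F(x_{n+1},y)+\lambda_n\bigl(D_\phi(y,x_n)-D_\phi(y,x_{n+1})\bigr) \ge \lambda_n D_\phi(x_{n+1},x_n) \ge 0,
\]
where the last inequality uses Remark~\ref{BregLemmaGrad}; this yields (ii) and the sharper form will be reused for (iv). For (iii), I would substitute $y = x^* \in \Omega(F,C)$ in (ii). The defining property $F(x^*,x_{n+1}) \ge 0$ together with the monotonicity of $F$ gives $F(x_{n+1},x^*) \le -F(x^*,x_{n+1}) \le 0$, so the bifunction term in (ii) is nonpositive, whence $\lambda_n\bigl(D_\phi(x^*,x_n)-D_\phi(x^*,x_{n+1})\bigr) \ge 0$; dividing by $\lambda_n>0$ gives the Fej\'er-type monotonicity.

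For (iv), I would not discard the $D_\phi(x_{n+1},x_n)$ term: substituting $y = x^*$ into $F_{n,\lambda_n}(x_{n+1},x^*)\ge 0$ and again using $F(x_{n+1},x^*)\le 0$ produces
\[
D_\phi(x_{n+1},x_n) \le D_\phi(x^*,x_n)-D_\phi(x^*,x_{n+1}).
\]
The right-hand side telescopes, and by (iii) the sequence $\{D_\phi(x^*,x_n)\}$ is nonnegative and nonincreasing, hence convergent; summing shows that $\sum_{n=0}^\infty D_\phi(x_{n+1},x_n) \le D_\phi(x^*,x_0) < \infty$, which forces $D_\phi(x_{n+1},x_n)\to 0$. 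The only genuinely delicate step is (i): the assumptions (C6) and Assumption~\ref{AssumpExist} as stated involve a fixed $\bar x$, so to push the induction through one needs to interpret them as being valid for every $\bar x \in C$ (equivalently, at each iterate $x_n$). This is the point I expect to flag most carefully, whereas parts (ii)--(iv) are essentially algebraic consequences of the optimality condition combined with monotonicity and the nonnegativity of the Bregman distance.
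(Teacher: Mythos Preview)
Your proposal is correct and follows essentially the same route as the paper: apply Corollary~\ref{UniqCor} with $\bar x=x_n$ for well-definedness, expand $F_{n,\lambda_n}(x_{n+1},y)\ge 0$ and drop the nonnegative term $\lambda_n D_\phi(x_{n+1},x_n)$ for (ii), substitute $y=x^*$ and use monotonicity of $F$ for (iii), and telescope the sharper inequality $D_\phi(x_{n+1},x_n)\le D_\phi(x^*,x_n)-D_\phi(x^*,x_{n+1})$ for (iv). Your remark that Assumption~\ref{AssumpExist} and Condition~\ref{CondA6} must be read as holding for every base point $\bar x\in C$ is a valid observation about how the paper implicitly uses these hypotheses.
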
 

\begin{proof}
(i) It follows from Corollary \ref{UniqCor} that $\{x_n\}$ is well defined.\\
(ii) Let $x_{n+1}\in \intr(C)$ be the solution of \eqref{RegFunc}. Then $F_{n,\lambda_n}(x_{n+1},y)\geq 0$ for all $y\in C$, that is, 
\begin{equation}\label{Proofeq5}
	F(x_{n+1},y)+\lambda_n(D_\phi(y,x_n)-D_\phi(y,x_{n+1})-D_\phi(x_{n+1},x_n))\geq 0 \text{ for all } y \in C.
\end{equation}
Since $D_\phi(x_{n+1},x_n)>0$, we immediately obtain
\[F(x_{n+1},y) + \lambda_n 	(D_\phi(y,x_{n})-D_\phi(y,x_{n+1})) \geq 0 \text{ for all } y \in C.\]
(iii) Now taking $y=x^*\in \Omega(F,C)$ in \eqref{Proofeq5}, we see that 
\[F(x_{n+1},x^*) + \lambda_n 	(D_\phi(x^*,x_{n})-D_\phi(x^*,x_{n+1})-D_\phi(x_{n+1},x_n)) \geq 0.\]
Note that $F(x^*,x_{n+1})\geq0$. The monotonicity of $F$ implies that  $F(x_{n+1},x^*)\leq 0$. Thus 
\[ D_\phi(x^*,x_{n})-D_\phi(x^*,x_{n+1})-D_\phi(x_{n+1},x_n) \geq 0 \text{ for all } n \in \N.\]
This implies that 
\begin{equation}\label{sum-ineql}
	0\leq D_\phi(x_{n+1},x_n) \leq D_\phi(x^*,x_{n})-D_\phi(x^*,x_{n+1}) \text{ for all } n \in \N.
\end{equation}
Thus 
\[D_\phi(x^*,x_{n+1})\leq D_\phi(x^*,x_{n}) \text{ for all } n \in \N.\]
(iv) Summing up \eqref{sum-ineql} from $n=0$ to $n=k$, we obtain
\[ \sum_{n=0}^{k}D_\phi(x_{n+1},x_n) \leq D_\phi(x^*,x_0)-D_\phi(x^*,x_{k+1}) \leq D_\phi(x^*,x_0).\]
Letting $n$ goes to $\infty$, we conclude that $\sum_{n=0}^{\infty}D_\phi(x_{n+1},x_n)<\infty$ and, in particular, we have $\lim_{n \to \infty} D_\phi(x_{n+1},x_n)=0$.
\end{proof}

\begin{theorem}\label{Cluster}
Let $\M$ be an Hadamard manifold and let $C$ be a nonempty, closed and geodesically convex subset of $\M$ such that $C\subset\intr\dom(\phi)$, where
$\phi \colon \M \to (-\infty,\infty]$ is a Bregman function satisfying Conditions \textup{\ref{B1}}-\textup{\ref{B3}}. Take $\bar{x}\in C$   and let $F \colon C \times C \to \R$ be a bifunction such that  Conditions \textup{\ref{C1}}-\textup{\ref{C4}} and  \textup{\ref{CondA6}} hold. Suppose that  Assumption \ref{AssumpExist} holds and $\Omega(F,C)\neq \emptyset$. 
Then every cluster point of the sequence $\{x_n\}$ generated by \eqref{RegFunc} is a solution to $\ep(F,C)$.
\end{theorem}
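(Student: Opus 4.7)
The plan is to fix an arbitrary cluster point $\bar{x}$ of $\{x_n\}$, pick a subsequence $\{x_{n_k}\}$ with $x_{n_k}\to\bar{x}$, and pass the variational inequality in Proposition~\ref{TheoremExist}(ii) to the limit along $n_k$. The target conclusion is $F(\bar{x},y)\geq 0$ for every $y\in C$, i.e., $\bar{x}\in\Omega(F,C)$.

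First I would verify that $\{x_n\}$ is bounded. Fixing any $x^*\in\Omega(F,C)$, Proposition~\ref{TheoremExist}(iii) shows that $D_\phi(x^*,x_n)\leq D_\phi(x^*,x_0)$ for all $n$, so $\{x_n\}$ lies in the right level set $R_{x^*,D_\phi(x^*,x_0)}$, which is bounded by Condition \textup{\ref{B1}}. In particular, the companion subsequence $\{x_{n_k+1}\}$ is also bounded. Combining the boundedness of $\{x_{n_k+1}\}$ with $x_{n_k}\to\bar{x}$ and the fact that $D_\phi(x_{n_k+1},x_{n_k})\to 0$ (a consequence of Proposition~\ref{TheoremExist}(iv)), Condition \textup{\ref{B3}} yields $x_{n_k+1}\to\bar{x}$. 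This bridging step is the crucial one: it aligns both successive iterates with the same limit, which is what makes the regularization terms vanish in the limit.

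Next I would pass to the limit in Proposition~\ref{TheoremExist}(ii), which rewritten reads
\[
F(x_{n_k+1},y)\;\geq\;\lambda_{n_k}\bigl(D_\phi(y,x_{n_k+1})-D_\phi(y,x_{n_k})\bigr) \text{ for all } y\in C.
\]
Because $\phi$ is continuous and continuously differentiable on $Z$, and because Lemma~\ref{exp-rem} guarantees continuity of both $x\mapsto\exp_x^{-1}y$ and the parallel-transport-dependent terms, the map $x\mapsto D_\phi(y,x)$ is continuous. Hence $D_\phi(y,x_{n_k+1})-D_\phi(y,x_{n_k})\to 0$, and since $\{\lambda_n\}$ is bounded, the right-hand side tends to $0$. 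This gives $\liminf_{k\to\infty}F(x_{n_k+1},y)\geq 0$. Finally, upper semicontinuity of $F(\cdot,y)$ (Condition \textup{\ref{C2}}) applied at $\bar{x}$ yields
\[
F(\bar{x},y)\;\geq\;\limsup_{k\to\infty}F(x_{n_k+1},y)\;\geq\;\liminf_{k\to\infty}F(x_{n_k+1},y)\;\geq\;0,
\]
valid for every $y\in C$, so $\bar{x}\in\Omega(F,C)$.

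The main obstacle is the bridging step $x_{n_k+1}\to\bar{x}$. Without it, upper semicontinuity of $F(\cdot,y)$ could not be invoked at $\bar{x}$, and the Bregman difference would not cancel in the limit. This is precisely where Conditions \textup{\ref{B1}} and \textup{\ref{B3}}, together with the Fejér-type monotonicity from Proposition~\ref{TheoremExist}(iii) and the summability from Proposition~\ref{TheoremExist}(iv), all come together. Everything else is a continuity argument on the Hadamard manifold via Lemma~\ref{exp-rem}.
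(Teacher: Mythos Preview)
Your proof is correct and follows essentially the same route as the paper: bound $\{x_n\}$ via Proposition~\ref{TheoremExist}(iii) and \textup{\ref{B1}}, use Proposition~\ref{TheoremExist}(iv) together with \textup{\ref{B3}} to force $x_{n_k+1}\to\bar{x}$, then pass Proposition~\ref{TheoremExist}(ii) to the limit and invoke \textup{\ref{C2}}. The only cosmetic difference is that you argue directly that $x\mapsto D_\phi(y,x)$ is continuous, whereas the paper routes this through the three-point identity \eqref{D2}; also, be aware that the symbol $\bar{x}$ is already reserved in the statement for the fixed regularization center, so you should rename your cluster point (the paper uses $\hat{x}$).
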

\begin{proof}
It follows from Proposition \ref{TheoremExist} (iii) that the sequence $\{D_\phi(x^*,x_n)\}$ is decreasing and nonnegative. Thus the sequence $\{D_\phi(x^*,x_n)\}$ converges and hence is bounded. Now the boundedness of $\{x_n\}$ follows from Condition \textup{\ref{B1}}. Let $\{x_{n_k}\}$ be a subsequence of $\{x_n\}$ converging to $\hat{x}$. Using Proposition \ref{TheoremExist} (iv), we have
\begin{equation}\label{Dphi0}
	\lim_{k \to \infty}D_\phi(x_{n_k+1},x_{n_k})=0.
\end{equation}
It follows from Condition \textup{\ref{B3}} that  $\lim_{k\to \infty}x_{n_k+1}=\hat{x}$. 

On the other hand, using the Proposition \ref{TheoremExist} (ii), we  have
\begin{equation}\label{Dphi}
	F(x_{n_k+1},y)+\lambda_{n_k}(D_\phi(y,x_{n_k})-D_\phi(y,x_{n_k+1}))\geq 0 \text{ for all } y \in C.
\end{equation}
Using \eqref{D2}, we get
\begin{align}\label{Dphi+f}
	&D_\phi(y,x_{n_k})-D_\phi(y,x_{n_k+1})-D_\phi(x_{n_k+1},x_{n_k})\nonumber\\
	&=-\langle \grad \phi(x_{n_k}), \exp_{x_{n_k}}^{-1}y\rangle+\langle \grad \phi(x_{n_k+1}), \exp_{x_{n_k+1}}^{-1}y\rangle+\langle \grad \phi(x_{n_k}), \exp_{x_{n_k}}^{-1}x_{n_k+1}\rangle.
\end{align}
Taking the limit as $k\to \infty$ in \eqref{Dphi+f} and using the continuity of $\grad \phi$, Lemma \ref{exp-rem} and \eqref{Dphi0}, we obtain
\[\lim_{k \to \infty}(D_\phi(y,x_{n_k})-D_\phi(y,x_{n_k+1}))=0.\]
It follows from \eqref{Dphi} and the upper semicontinuity of $F(\cdot,y)$ that
\begin{align*}
	0&= \liminf_{k \to \infty}  \lambda_{n_k}[D_\phi(y,x_{n_k+1})-D_\phi(y,x_{n_k})]\\
	& \leq \liminf_{k \to \infty} F(x_{n_k+1},y)\\
	&\leq\limsup_{k \to \infty} F(x_{n_k+1},y)\\
	&\leq F(\hat{x},y) \text{ for all } y \in C.
\end{align*}
Thus $\hat{x}\in \Omega(F,C)$.
\end{proof}

\begin{theorem}
Let $\M$ be an Hadamard manifold and let $C$ be a nonempty, closed and geodesically convex subset of $\M$ such that $C\subset\intr\dom(\phi)$, where
$\phi \colon \M \to (-\infty,\infty]$ is a Bregman function satisfying Conditions \textup{\ref{B1}}-\textup{\ref{B3}}. Take $\bar{x}\in C$   and let $F \colon C \times C \to \R$ be a bifunction such that  Conditions \textup{\ref{C1}}-\textup{\ref{C4}} and \textup{\ref{CondA6}} hold. Suppose that  Assumption \ref{AssumpExist} holds and $\Omega(F,C)\neq \emptyset$.  
Then the sequence $\{x_n\}$ generated by \eqref{RegFunc} converges to a solution of $\ep(F,C)$.
\end{theorem}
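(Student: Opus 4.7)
My plan is to upgrade the cluster-point result (Theorem \ref{Cluster}) to full convergence by combining the Fej\'er-type monotonicity of $\{D_\phi(x^*,x_n)\}$ from Proposition \ref{TheoremExist} (iii) with the ``uniqueness of limit'' property that the Bregman conditions \ref{B2} and \ref{B3} enforce. The overall skeleton is: (a) get boundedness of $\{x_n\}$, (b) pick an arbitrary cluster point $\hat{x}$, (c) show $D_\phi(\hat{x},x_n)\to 0$ along the whole sequence, (d) deduce that any other cluster point must coincide with $\hat{x}$.

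First I would fix an arbitrary $x^*\in \Omega(F,C)$ (which exists by assumption). By Proposition \ref{TheoremExist} (iii) the sequence $\{D_\phi(x^*,x_n)\}$ is nonincreasing and bounded, hence $\{x_n\}$ lies in a right level set $R_{x^*,\alpha}$ for $\alpha=D_\phi(x^*,x_0)$. Condition \ref{B1} then yields boundedness of $\{x_n\}$, so at least one cluster point exists. Let $\hat{x}$ be any such cluster point and let $\{x_{n_k}\}$ be a subsequence with $x_{n_k}\to\hat{x}$. By Theorem \ref{Cluster}, $\hat{x}\in\Omega(F,C)$, so Proposition \ref{TheoremExist} (iii) applies with $x^*$ replaced by $\hat{x}$: the whole sequence $\{D_\phi(\hat{x},x_n)\}$ is convergent.

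To identify the limit, I would invoke Condition \ref{B2}: since $x_{n_k}\to \hat{x}\in \bar Z$, we have $D_\phi(\hat{x},x_{n_k})\to 0$. Combined with the fact that the full sequence $\{D_\phi(\hat{x},x_n)\}$ converges, this forces
\begin{equation*}
\lim_{n\to\infty} D_\phi(\hat{x},x_n)=0.
\end{equation*}
Now suppose $\tilde{x}$ is another cluster point of $\{x_n\}$, with $x_{m_k}\to \tilde{x}$. Applying Condition \ref{B3} to the (constant, hence bounded) sequence $z_k\equiv \hat{x}$ and to $y_k=x_{m_k}$, we have $D_\phi(z_k,y_k)=D_\phi(\hat{x},x_{m_k})\to 0$ and $y_k\to \tilde{x}$, so the conclusion of \ref{B3} gives $z_k\to\tilde{x}$, i.e.\ $\hat{x}=\tilde{x}$. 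Therefore $\{x_n\}$ has a unique cluster point, and the bounded sequence must converge to it, with the limit belonging to $\Omega(F,C)$ by Theorem \ref{Cluster}.

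The only delicate point I foresee is the direction of the Bregman distance in step (c)--(d): because $D_\phi$ is asymmetric, one must be careful to keep the solution $\hat{x}$ in the \emph{first} argument, since the monotonicity estimate from Proposition \ref{TheoremExist} (iii) and the joining hypothesis \ref{B3} are both formulated with the fixed point in that slot. Provided this bookkeeping is respected, the argument is a direct assembly of the already-established pieces and no additional estimates are needed.
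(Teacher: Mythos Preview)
Your proposal is correct and follows essentially the same route as the paper: both arguments extract a cluster point $\hat{x}\in\Omega(F,C)$ via Theorem~\ref{Cluster}, use Proposition~\ref{TheoremExist}~(iii) together with Condition~\ref{B2} to conclude $D_\phi(\hat{x},x_n)\to 0$ along the full sequence, and then apply Condition~\ref{B3} along a second convergent subsequence to force uniqueness of the cluster point. Your explicit re-derivation of boundedness via \ref{B1} and your careful tracking of the argument order in $D_\phi$ are in fact cleaner than the paper's presentation, but the underlying mechanism is identical.
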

\begin{proof}
It suffices to check that there exists only one cluster points of $\{x_n\}$. Let $\hat{x}$ and $x^*$ be two cluster point of $\{x_n\}$ and consider the subsequences $\{x_{n_j}\}$ and $\{x_{n_l}\}$ converging to $\hat{x}$ and $x^*$, respectively. By Theorem \ref{Cluster},  both $\hat{x}$ and  $x^*$ belong to $\Omega(F,C)$. Using Condition \textup{\ref{B2}}, we infer that $\lim_{j\to \infty}D_\phi(x_{n_j},\hat{x})=0$.
By Proposition \ref{TheoremExist} (iii), $\{D_\phi(\hat{x},x_n)\}$ is a decreasing and bounded below sequence with a subsequence converging to $0$. Hence 
\begin{equation}\label{cgs}
	\lim_{n\to \infty}D_\phi(x_{n},\hat{x})=0.
\end{equation}
Since $\{x_{n_l}\}$ is a subsequence of $\{x_n\}$ converging $x^*$, using \eqref{cgs}, we obtain 
\[\lim_{l\to \infty}D_\phi(x_{n_l},\hat{x})=0.\]
By Condition \textup{\ref{B3}}, it follows that $\hat{x}=x^*$. Hence $\{x_n\}$ converges to an element of $\Omega(F,C)$.
\end{proof}

\newpage
\section{Numerical Experiments}

In this section we present the results of numerical experiments to demonstrate the computational performance of Algorithm~\ref{BregAlg} using different Bregman functions. All implementations were carried out in MATLAB R2024b and executed on a MacBook Air equipped with an Intel Core i5-5350U (1.8~GHz, dual-core) processor and 8~GB RAM. In each example, the subproblem defined in~\eqref{RegFunc} were solved using \cite[Algorithm~3.1]{Tan2024}.

\begin{example}\rm \label{Example1}

Let $\R^N_{++}=\{x=(x_1,\ldots, x_N)\in \R^N \colon  x_i>0, ~ i=1,\ldots,N\}$ and let $\M=(\R^N_{++},\langle \cdot,\cdot \rangle)$ be the Riemannian manifold with the
Riemannian metric defined by
\begin{equation}\label{NumRM}
\langle u,v \rangle =uG(x)v^T, \quad x\in \R^N_{++}, u,v \in \T_x\R^N_{++}=\R^N,
\end{equation}
where $G(x)$ is a diagonal matrix defined by $G(x)= \text{diag}(x_1^{-2},\ldots, x_N^{-2})$.
The Riemannian distance $d \colon  \M\times \M\to [0,\infty)$ is given by
$$d(x,y)= \left(\sum_{i=1}^N \left(\ln\frac{x_i}{y_i}\right)^2\right)^{1/2} \text{ for all } x, y \in  \M.$$
The sectional curvature of this Riemannian manifold $\M$ is $0$. Thus $\M=(\R^N_{++}, \langle \cdot, \cdot \rangle) $ is an Hadamard manifold.


Let $\phi \colon \M \to \R$ be a Bregman function. Then the general formula for the Bregman distance is 
\[D_\phi(x,y)=\phi(x)-\phi(y)-\sum_{i=1}^{n}y_i\ln(x_i/y_i)\frac{\partial \phi(y)}{\partial y_i}.\]
Consider the  function $\phi \colon \M \to \mathbb{R}$ defined by
\[\phi(x) \coloneq \sum_{i=1}^{n}(\ln^2 x_i+x_i^2) \text{ for all } x \in \M.\]
Then $\phi$ is a Bregman function and the Bregman distance corresponding to $\phi$ is given by 
\begin{equation}\label{Breg1}
D_\phi(x,y)=\sum_{i=1}^{n}\left[\left(\ln\frac{x_i}{y_i}\right)+2(x_i-y_i)^2+2 \left(y_i^2\ln\left(\frac{y_i}{x_i}\right)+x_iy_i-y_i^2\right)\right] \text{ for all } x,y \in \M.
\end{equation}
For more details, see \cite{PapaJOGO2013}.

Let $C=(1,\infty)^n=(1,\infty)\times \cdots \times (1,\infty) \subset M$ and  consider the function $\psi \colon C\to \mathbb{R}$ defined by
\[\psi(x)\coloneq \sum_{i=1}^{n}(\ln x_i \ln \ln x_i) \text{ for all } x \in \M.\]
Then $\psi$ is a Bregman function and the Bregman distance with respect to $\psi$ is given by 
\begin{equation}\label{Breg2}
D_\psi(x,y)=\sum_{i=1}^{n}\left[\ln x_i \ln \ln x_i-\ln y_i \ln \ln y_i-\ln (x_i/y_i)[1+ \ln \ln y_i]\right] \text{ for all } x,y \in \M.
\end{equation}
We also consider the Bregman distance with respect to $\xi(x)=\frac{1}{2}d^2(x,x_0)$ for some $x_0\in \M$, as discussed in Remark \ref{distanceBregman}, which is given by
\begin{equation}\label{Org}
D_\xi(x,y) = \tfrac{1}{2} d^2(x,y) \text{ for all } x,y \in \M.
\end{equation}
\end{example}

Let $\M$ be the Riemannian manifold with the Riemannian metric defined by \eqref{NumRM}.	Let $N=3$ and 
let $F\colon\M \times \M\to \R$ be defined by
\begin{align}
	F(x,y)&=\left(3\ln\left(\frac{x_1x_2}{x_3}\right)	\ln\left(\frac{y_1}{x_1}\right)+ 3\ln\left(\frac{x_1x_2}{x_3}\right)	\ln\left(\frac{y_{2}}{x_{2}}\right)\right. \nonumber\\
	&\quad \left.-3\ln\left(\frac{x_1x_2}{x_3}\right)	\ln\left(\frac{y_3}{x_3}\right)\right)  \text{ for all } x,y\in \M. \label{BiF1}
\end{align}
Note that $F$ satisfies Conditions \textup{\ref{C1}}-\textup{\ref{C4}}. Also Condition  \textup{\ref{CondA6}} holds for $F$ and $D_\phi$ given by \eqref{Breg1} and \eqref{Breg2}. From Remark \ref{ZeroConvBreg} it follows that Assumption \ref{AssumpExist} also holds.

We perform numerical experiments to approximate the solution of the equilibrium problem $EP(F,C)$ using Algorithm~\ref{BregAlg} for the bifunction \( F \) defined by \eqref{BiF1} and $C=(1,\infty)^3\subset \M$, and three Bregman distances given by \eqref{Org}, ~\eqref{Breg1}, and~\eqref{Breg2}. We denote these three distances by {Org}, {Breg1}, and {Breg2}, respectively. The parameter sequence \(\{\lambda_n\}\) is chosen to be constant, that is, \(\lambda_n = \lambda\) for all \( n \in \mathbb{N} \). Three values of \(\lambda\) are considered, \(\lambda = 0.3\), \(\lambda = 0.6\) and \(\lambda = 0.9\), with the initial point \( x_0 = (20, 5, 3)\).

For all tests, the stopping criterion for the inner iteration (that is, for \cite[Algorithm~3.1]{Tan2024}) was set to be $d(x_{n,k+1}, x_{n,k}) < 10^{-3}$, while the outer iteration in Algorithm~\ref{BregAlg} terminated when $Er(n) = d(x_{n+1}, x_n) \leq 10^{-6}$.

From Figure \ref{Fig1} and Table \ref{lambda_comparison}, we observe that the Bregman distance Breg2 achieves better overall convergence compared to Org and Breg1. Although Org and Breg2 require approximately the same number of outer iterations (both slightly fewer than Breg1), we note that Breg1 consistently uses fewer inner iterations and less computation time than both Org and Breg2.

\begin{table}[h!]
\centering
\caption{Comparison of results for three values of $\lambda$}
\begin{tabular}{lcSSSS}
	\toprule
	$D_\phi$ & $\lambda$ & {Out. Itr.} & {Inn. Itr.} & {Time (s)} & {$Er(n)$} \\
	\midrule
	Org   & 0.3 & 70 & 253 & 4.8380 & 9.590e-07 \\
	Org   & 0.6 & 71 & 259 & 4.6991 & 9.279e-07 \\
	Org   & 0.9 & 72 & 269 & 4.2970 & 9.102e-07 \\
	\midrule
	Breg1 & 0.3 & 85  & 363 & 11.765 & 9.240e-07 \\
	Breg1 & 0.6 & 83  & 533 & 30.946 & 9.206e-07\\
	Breg1 & 0.9 & 142 & 535 & 14.946 & 9.971e-07 \\
	\midrule
	Breg2 & 0.3 & 68 & 210 & 3.4920 & 9.829e-07 \\
	Breg2 & 0.6 & 70 & 217 & 3.4428 & 9.889e-07 \\
	Breg2 & 0.9 & 70 & 226 & 3.6600 & 9.788e-07 \\
	\bottomrule
\end{tabular}
\label{lambda_comparison}
\end{table}

\begin{figure}[htbp]
\centering
\subfigure[$\lambda=0.3$]{
	\includegraphics[width=0.5\textwidth]{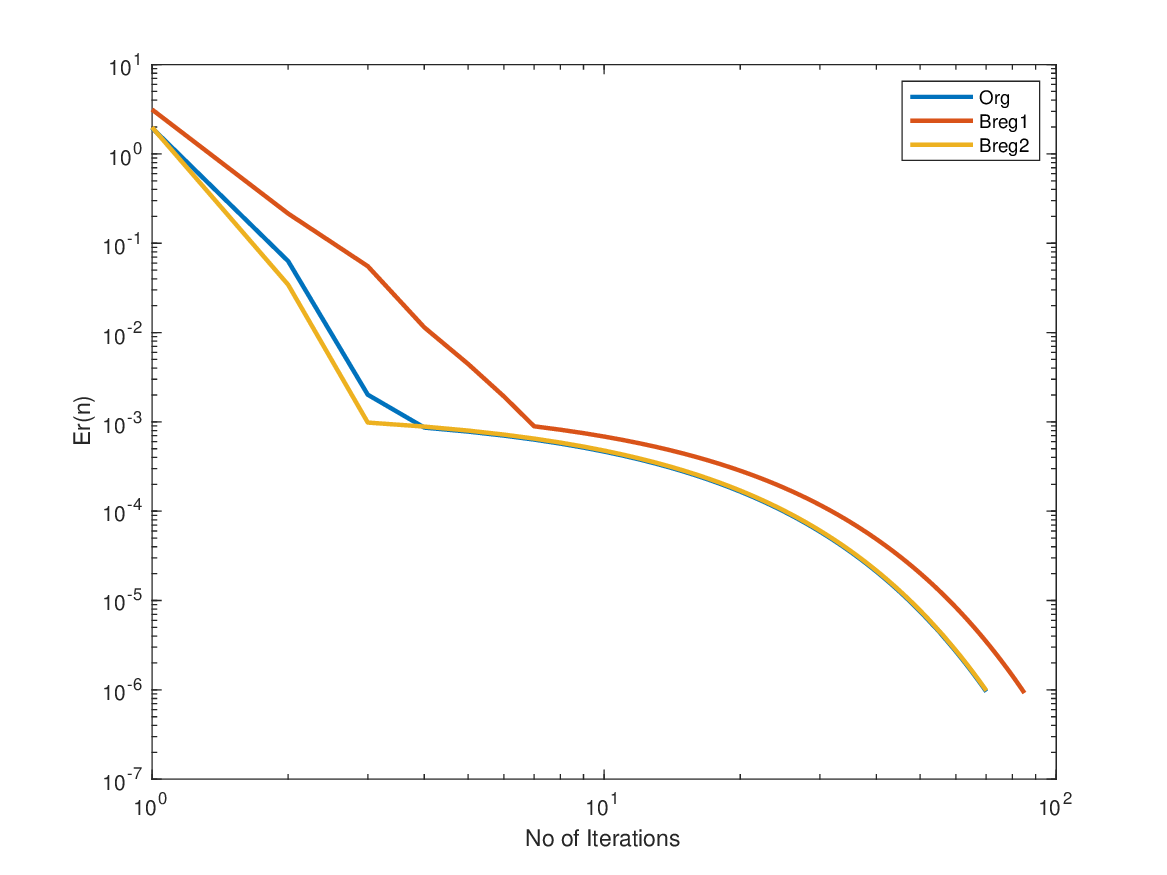}
}
\subfigure[$\lambda=0.6$]{
	\includegraphics[width=0.5\textwidth]{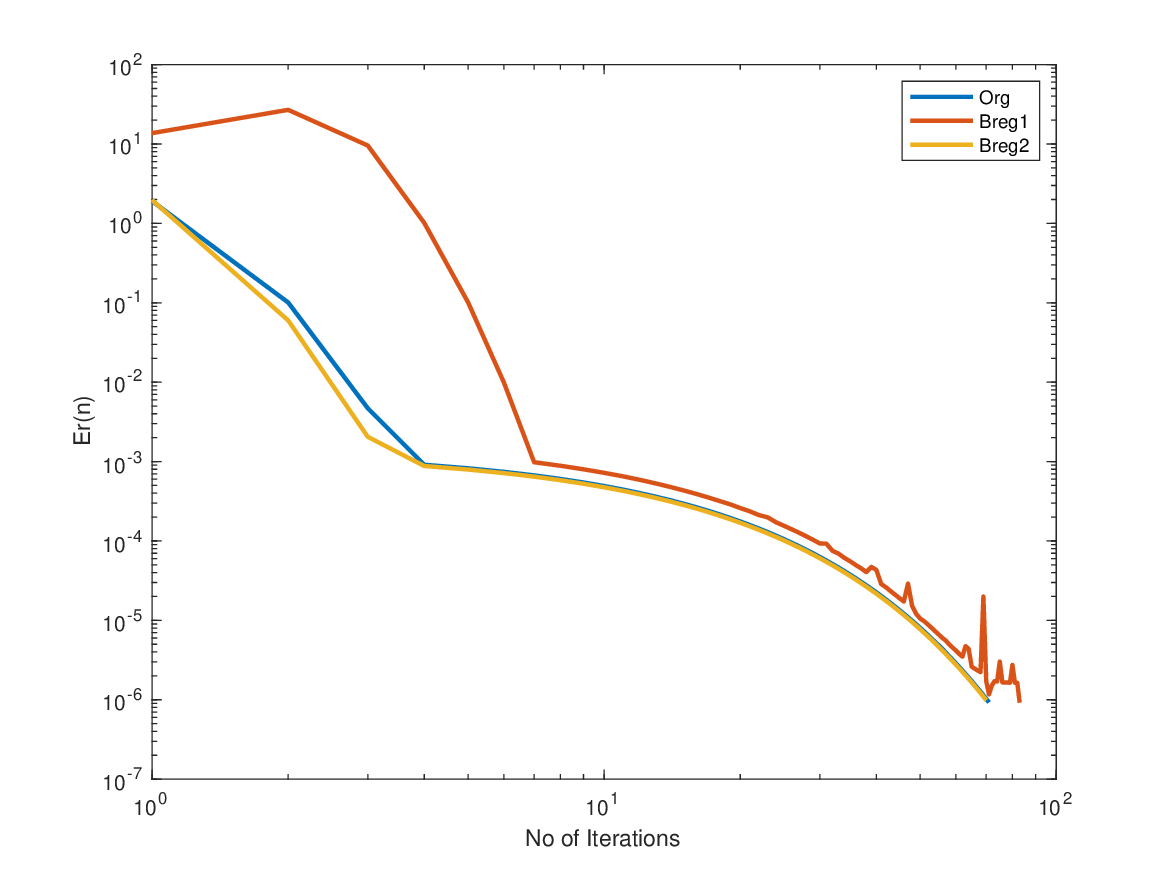}
}%
\subfigure[$\lambda=0.9$]{
	\includegraphics[width=0.5\textwidth]{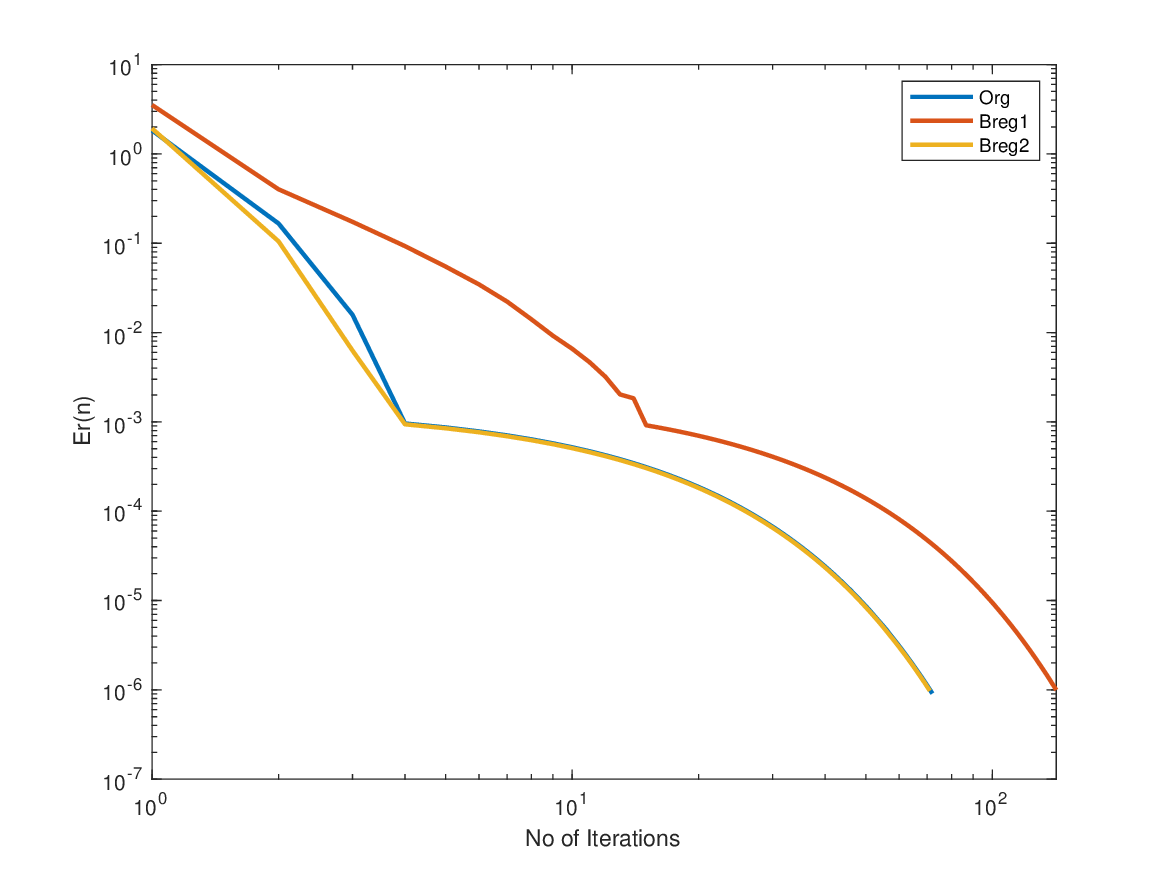}
}
\caption{Convergence behavior of Algorithm \ref{BregAlg} for  $\lambda=0.3$, $\lambda=0.6$ and $\lambda=0.3$}
\label{Fig1}
\end{figure}

	\begin{example}\label{Example2}
			Consider the set $\Ps(n)$, consisting of all symmetric positive definite matrices of size $n \times n$, that is,
			\[\Ps(n)\coloneq \{x\in \R^{n \times n}\colon x=x^T \text{ and } a^Txa>0 \text{ for all } a\in \R^n\}.\]
			\(\Ps(n)\) forms an Hadamard manifold with the affine invariant metric
			\[
			\langle u,v \rangle_x := \tr(x^{-1}ux^{-1}v)  \text{ for all } u,v \in \T_x\Ps(n)(\cong \mathcal{S}(n)),
			\]
			where $\mathcal{S}(n)$ refers to the set of all symmetric matrices of size $n \times n$.  
			
			We denote by $\Ex$ and $\Lg$ the matrix exponential and logarithm defined by $\Ex x=\sum_{k=0}^\infty \frac{1}{k!}x^k$ and $\Lg x=\sum_{k=0}^\infty \frac{1}{k}(I-x)^k$. 
			
			The Riemannian distance $d:\Ps(n)\times \Ps(n) \to [0,\infty)$ is defined by
			\[d(x,y)=\|\Lg(x^{-\frac12}yx^{-\frac12})\| \text{ for all } x,y \in \Ps(n),\]
			where $\|\cdot\|$ refers to the Frobenius norm.
			The exponential map $\exp_x:\T_x \Ps(n) \to \Ps(n)$ is given by
			\[\exp_xv=x^{\frac12}\Ex(x^{-\frac12}vx^{-\frac12})x^{\frac12} \text{ for all } v\in \T_x\Ps(n);\]
			the inverse of the  exponential map $\exp_x^{-1}: \Ps(n) \to \T_x\Ps(n)$ is given by 
			\[\exp_x^{-1}y=x^{\frac12}\Lg(x^{-\frac12}yx^{-\frac12})x^{\frac12} \text{ for all } y\in \Ps(n);\]
			and the geodesic joining $x$ to $y$ is given by
			\[\gamma(x,y;t)=x^{\frac12}\Ex(t\Lg(x^{-\frac12}yx^{-\frac12}))x^{\frac12} \text{ for all } t\in [0,1].\]
		 Consider the bifunction  $F\colon \Ps(n)\times \Ps(n) \to \R$ defined by
		\begin{equation}\label{BiF2}
				F(x,y)=\log(\det(x^{-\frac12}yx^{-\frac12}))=\log \frac{\det y}{\det x} \text{ for all } x,y\in \Ps(n).
		\end{equation}
			Then $F$ is monotone  and  Conditions \textup{\ref{C1}}-\textup{\ref{C4}} hold;  see Appendix~\ref{Ap1}.
			Consider the Bregman function $\phi \colon \Ps(n) \to \R$ given by 
			\[\phi(x)=\det(x) \text{ for all } x\in \Ps(n).\]
			Then the Bregman distance $D_\phi\colon  \Ps(n) \times  \Ps(n) \to \R$ is given by
			\begin{align}\label{BregSPD}
				D_\phi(x,y)
				& = 	\det x-\det y- \det y \log \frac{\det x}{\det y} \text{ for all } x,y \in  \Ps(n).
			\end{align}
			For more details, see Appendix~\ref{Ap1}. Now the regularization $\tilde{F}$ is given by 
			\begin{align*}
				\tilde{F}(x,y)
				&= (1+ \lambda(\det x - \det \bar{x}))\log \frac{\det y}{\det x}.
			\end{align*}
			Consider the set 
			\[K_x=\left\{y\in C: \tilde{F}(x,y)=(1+ \lambda(\det x - \det \bar{x}))\log \frac{\det y}{\det x}<0\right\}.\]
			Then $K_x$ is convex;  see Appendix~\ref{Ap1}.
		
	\end{example}

	\begin{table}[h!]
		\centering
		\caption{Comparison of results for three values of $\lambda$}
		\begin{tabular}{lcSSS}
			\toprule
			$\lambda$ & {Out. Itr.} & {Inn. Itr.} & {Time (s)} & {$Er(n)$} \\
			\midrule
			 0.2 & 20 & 1864 & 6.9518 & 1.2561e-15 \\
			 0.3 & 29 & 2738 & 10.8586 &4.4409e-15 \\
			0.4& 40 & 3815 & 15.9039 &1.2561e-15  \\
			\bottomrule
		\end{tabular}
		\label{Comparison}
	\end{table}

	 We consider the Hadamard manifold $\mathcal{P}(2)$, consisting of all symmetric positive definite matrices of size $2 \times 2$, and perform numerical experiments to approximate the solution of the equilibrium problem (EP)~\eqref{equib-prob} using Algorithm~\ref{BregAlg}. The bifunction $F$ is defined by~\eqref{BiF2}, and the Bregman distance $D_\phi$ is given by~\eqref{BregSPD}.
	
	The parameter sequence $\{\lambda_n\}$ is chosen to be constant, that is, $\lambda_n = \lambda$ for all $n \in \mathbb{N}$. Three values of $\lambda$ are considered: $\lambda = 0.2$, $\lambda = 0.3$, and $\lambda = 0.4$, with the initial point 
	\[
	x_0 = \begin{pmatrix}7 & 0 \\ 0 & 12\end{pmatrix}.
	\]
	
	For all tests, the stopping criterion for the inner iteration (i.e., for \cite[Algorithm~3.1]{Tan2024}) is set as $d(x_{n,k+1}, x_{n,k}) < 10^{-5}$, while the outer iteration in Algorithm~\ref{BregAlg} is terminated when $Er(n) = d(x_{n+1}, x_n) \leq 10^{-15}$. We used the MATLAB built-in function \texttt{`fmincon'} to implement \cite[Algorithm~3.1]{Tan2024}.

	Table~\ref{Comparison} presents the number of iterations for both the inner and outer algorithms, along with the computational time and error terms for different values of $\lambda$. From Table~\ref{Comparison} and Figure~\ref{Compare}, we observe that the algorithm requires fewer iterations (both inner and outer) and less computational time for $\lambda = 0.2$ compared to $\lambda = 0.3$ and $\lambda = 0.4$. We also observe that, among the tested values of $\lambda$, larger values are associated with slower convergence of Algorithm~\ref{BregAlg}.

\begin{figure}
	\centering
	\includegraphics[width=0.7\linewidth]{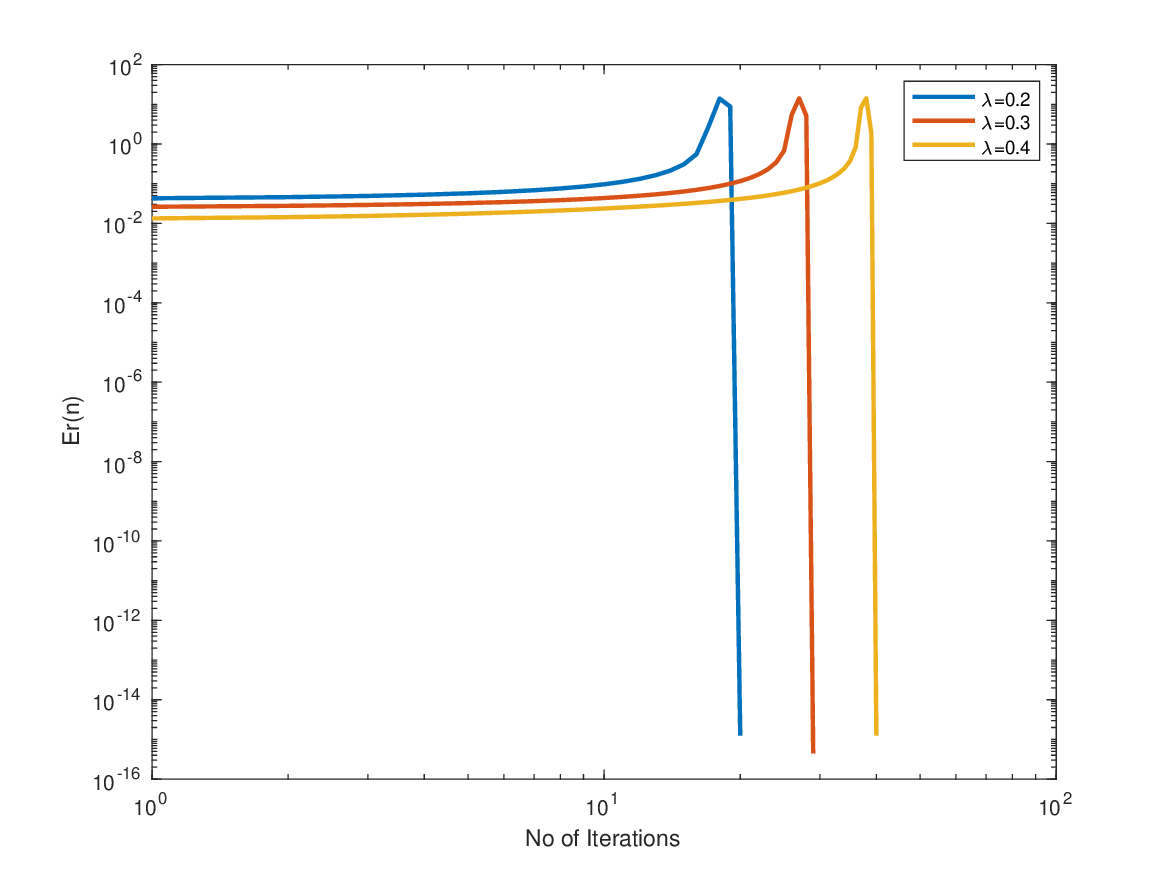}
	\caption{}
	\label{Compare}
\end{figure}

\section{Conclusion}

In this paper we have developed a Bregman regularized proximal point algorithm for solving monotone equilibrium problems on Hadamard manifolds. Although the regularization term induced by a Bregman function is, in general, nonconvex on Hadamard manifolds unless the curvature is zero, we established the convergence of the proposed scheme by imposing a strong convexity assumption on the set associated with the regularization term. In this framework, we proved that the iterates generated by the algorithm converge to a solution of the equilibrium problem. Moreover, we adopted a weaker coercivity condition on the Bregman function compared to those commonly assumed in the existing literature, thereby extending the applicability of Bregman regularization in non-Euclidean settings. Numerical experiments on representative examples further illustrate the effectiveness and robustness of the proposed method.

Future research will focus on relaxing the strong convexity assumption imposed on the regularization-induced set and studying convergence under weaker geometric conditions.

\bmhead{Acknowledgements}

Both authors are very grateful to two anonymous referees for their pertinent comments   	and useful suggestions.

\section*{Declarations}

\begin{itemize}
	\item\textbf{Funding:} No funding or external support was received for this research.
	\item \textbf{Conflict of interest/Competing interests:} The authors declare that they have no competing interests.
	\item \textbf{Ethics approval and consent to participate:} Not applicable.
	\item \textbf{Consent for publication:} Not applicable.
	\item \textbf{Data availability:} Not applicable.
	\item \textbf{Materials availability:} Not applicable.
	\item \textbf{Code availability:} The code used for the experiments is available from the corresponding author upon reasonable request.
	\item \textbf{Author contribution:} Both authors read and approved the final manuscript.
\end{itemize}

\begin{appendices}

\section{Geodesically convex set $K_x$} \label{Ap1}
Consider the Hadamard manifold $\Ps(n)$, consisting of all symmetric positive definite matrices of size $n \times n$, defined in Example \ref{Example2}.	Let $A:\Ps(n) \to \T \Ps(n)$ be the vector field defined by 
\[A(x)=x \text{ for all } x\in \Ps(n),\]
and consider the bifunction  $F\colon \Ps(n)\times \Ps(n) \to \R$ defined by
\[F(x,y)=\langle A(x), \exp_x^{-1}y \rangle \text{ for all } x,y\in \Ps(n).\]
Then $F$ is monotone since $A$ is monotone and also Conditions \textup{\ref{C1}}-\textup{\ref{C4}} hold. Note that
\begin{align*}
	F(x,y)&=\langle x, x^{\frac12}\Lg(x^{-\frac12}yx^{-\frac12})x^{\frac12}\rangle \\
	& = \tr(x^{-1}xx^{-1}x^{\frac12}\Lg(x^{-\frac12}yx^{-\frac12})x^{\frac12})\\
	&=\tr(x^{-\frac12}\Lg(x^{-\frac12}yx^{-\frac12})x^{\frac12})\\
	&=\tr(\Lg(x^{-\frac12}yx^{-\frac12}))\\
	&= \log(\det(x^{-\frac12}yx^{-\frac12}))=\log \frac{\det y}{\det x}.
\end{align*}
Consider the Bregman function $\phi \colon \Ps(n) \to \R$ given by 
\[\phi(x)=\det(x) \text{ for all } x\in \Ps(n).\]
Then the  Riemannian gradient of $\phi$ is given by
\[\grad \phi(x)=x\nabla \phi(x)x=x(\det(x) x^{-1})x=\det(x) x,\]
and hence the Bregman distance $D_\phi\colon  \Ps(n) \times  \Ps(n) \to \R$ is given by
\begin{align*}
D_\phi(x,y)&=\det x-\det y -\langle \grad \det y, \exp_y^{-1}x\rangle\\
& =	\det x-\det y -\langle \det(y) y,y^{\frac12}\Lg(y^{-\frac12}xy^{-\frac12})y^{\frac12}\rangle\\
& = 	\det x-\det y- \tr (y^{-1}\det(y)yy^{-1}y^{\frac12}\Lg(y^{-\frac12}xy^{-\frac12})y^{\frac12})\\
& = 	\det x-\det y- \tr (\det(y)y^{-\frac12}\Lg(y^{-\frac12}xy^{-\frac12})y^{\frac12})\\
& = 	\det x-\det y- \det y \tr (\Lg(y^{-\frac12}xy^{-\frac12}))\\
& = 	\det x-\det y- \det y \log \frac{\det x}{\det y}.
\end{align*}
Now the regularization $\tilde{F}$ is given by 
\begin{align*}
\tilde{F}(x,y)=& F(x,y)+\lambda(D_\phi(y,\bar{x})-D_\phi(y,x)-D_\phi(x,\bar{x}))\\
&= \log \frac{\det y}{\det x}+ \lambda\left(- \det \bar{x} \log \frac{\det y}{\det \bar{x}}+  \det x \log \frac{\det y}{\det x} +  \det \bar{x} \log \frac{\det x}{\det \bar{x}}\right)\\
&= \log \frac{\det y}{\det x}+ \lambda(\det x - \det \bar{x})\log \frac{\det y}{\det x}\\
&= (1+ \lambda(\det x - \det \bar{x}))\log \frac{\det y}{\det x}.
\end{align*}
Consider the set 
\[K_x=\left\{y\in C: \tilde{F}(x,y)=(1+ \lambda(\det x - \det \bar{x}))\log \frac{\det y}{\det x}<0\right\}.\]
Next, we  show that $K_x$ is geodesically convex.
Let $y_1,y_2 \in K_x$. Then 
\begin{equation}\label{Apeq1}
(1+ \lambda(\det x - \det \bar{x}))\log \frac{\det y_1}{\det x}<0 \text{ and }(1+ \lambda(\det x - \det \bar{x}))\log \frac{\det y_2}{\det x}<0.
\end{equation}
Now consider the geodesic joining $y_1$ to $y_2$ given by 
\[\gamma(y_1,y_2;t) = y_1^{1/2} \exp\Big(t \Lg(y_1^{-1/2} y_2 y_1^{-1/2})\Big) y_1^{1/2}=y_1^{1/2} (y_1^{-1/2} y_2 y_1^{-1/2})^t y_1^{1/2}.\]
Note that 
\begin{align*}
\det \gamma(y_1,y_2;t)&= \det(y_1^{1/2} (y_1^{-1/2} y_2 y_1^{-1/2})^t y_1^{1/2}=(\det y_1)^{1/2} (\det(y_1^{-1/2} y_2 y_1^{-1/2}))^t (\det y_1)^{1/2}\\
&= (\det y_1)^{1/2} ((\det y_1)^{-1/2} \det y_2 (\det y_1)^{-1/2}))^t (\det y_1)^{1/2}\\
&= \det y_1 (\det y_1)^{-t }(\det y_2)^t = (\det y_1)^{1-t} (\det y_2)^t.
\end{align*}
Using \eqref{Apeq1}, we get
\begin{align*}
&(1+ \lambda(\det x - \det \bar{x}))\log \frac{\det \gamma(y_1,y_2;t)}{\det x}\\
&= (1+ \lambda(\det x - \det \bar{x}))\log \frac{(\det y_1)^{1-t} (\det y_2)^t}{\det x}\\
& =(1-t) (1+ \lambda(\det x - \det \bar{x}))\log \frac{\det y_1}{\det x} + t (1+ \lambda(\det x - \det \bar{x}))\log \frac{\det y_1}{\det x} <0.
\end{align*}
Hence $K_x$ is geodesically convex.

\section{Geodesically non convex set $K_x$} \label{Ap2}

Now consider the Bregman function $\psi \colon \Ps(n) \to \R$ given by 
\[\psi(x)=\tr(x) \text{ for all } x\in \Ps(n).\]
Then the  Riemannian gradient of $\psi$ is given by
\[\grad \psi(x)=x\nabla \phi(x)x=x^2,\]
and hence the Bregman distance $D_\psi\colon  \Ps(n) \times  \Ps(n) \to \R$ is given by
\begin{align*}
D_\psi(x,y)&=\tr x-\tr y -\langle \grad \psi(y), \exp_y^{-1}x\rangle\\
& =	\tr x-\tr y -\langle  y^2,y^{\frac12}\Lg(y^{-\frac12}xy^{-\frac12})y^{\frac12}\rangle\\
& = 	\tr x-\tr y- \tr (y^{-1}y^2y^{-1}y^{\frac12}\Lg(y^{-\frac12}xy^{-\frac12})y^{\frac12})\\
& = 	\tr x-\tr y- \tr (y\Lg(y^{-\frac12}xy^{-\frac12})).
\end{align*}
The regularization $\tilde{F}$ is given by 
\begin{align*}
\tilde{F}(x,y)=& F(x,y)+\lambda(D_\psi(y,\bar{x})-D_\psi(y,x)-D_\psi(x,\bar{x}))\\
&= \log \frac{\det y}{\det x}+ \lambda\left(-\tr (\bar{x}\Lg(\bar{x}^{-\frac12}y\bar{x}^{-\frac12}))+\tr (x\Lg(x^{-\frac12}yx^{-\frac12}))\right.\\
& \quad 	\left.+\tr (\bar{x}\Lg(\bar{x}^{-\frac12}x\bar{x}^{-\frac12}))\right).
\end{align*}
Let $\lambda=1$ and consider the set  
\[K_x=\{y\in C: \tilde{F}(x,y)<0\}.\]
Then the set $K$ is not geodesically convex. 
To see this, let $\mathcal{M}=\mathcal{P}_2$ and let
\[
x=\begin{pmatrix}2&1\\1&1\end{pmatrix},\quad
\bar{x}=\begin{pmatrix}4&2\\2&3\end{pmatrix},\quad
y_1=\begin{pmatrix}3&1\\1&2\end{pmatrix},\quad
y_2=\begin{pmatrix}5&2\\2&1\end{pmatrix}.
\]
Then $\tilde{F}(x,y_1)=-0.78407$ and $\tilde{F}(x,y_2)=-0.25569$. Hence both
$y_1$ and $y_2$ belong to $K_x$.

The geodesic joining $y_1$ and $y_2$ is given by
\[
\gamma(y_1,y_2;t)
= y_1^{1/2}\exp\!\Big(t\,\Lg(y_1^{-1/2}y_2y_1^{-1/2})\Big)y_1^{1/2}.
\]
For  $t=\tfrac12$, we obtain
\begin{align*}
	\gamma(y_1,y_2;\tfrac12)
	&= y_1^{1/2}\big(y_1^{-1/2}y_2y_1^{-1/2}\big)^{1/2}y_1^{1/2}.
\end{align*}
However, $\tilde{F}\big(x,\gamma(y_1,y_2;\tfrac12)\big)=0.56105$, which shows that 
$\gamma(y_1,y_2;\tfrac12)$ does not belong to  $K_x$. Hence $K_x$ is not geodesically convex, as asserted.

\end{appendices}

	\FloatBarrier


\end{document}